\newcommand{\R}{{\mathbb R}}
\newcommand{\tu}{\tilde{u}}
\newcommand{\blambda }{\bar{\lambda}}
\newcommand{\vep}{\varepsilon}
\newcommand{\eps}{\varepsilon}
\numberwithin{equation}{section}
\newtheorem{theorem}{Theorem}[section]
\newtheorem{proposition}[theorem]{Proposition}
\newtheorem{lemma}[theorem]{Lemma}
\newtheorem{corollary}[theorem]{Corollary}
\newtheorem{remark}[theorem]{Remark}
\theoremstyle{definition}
\newcommand{\brm}{\begin{remark}\rm}
\newcommand{\erm}{\end{remark}}
\newcommand{\brms}{\begin{remark}\rm}
\newcommand{\erms}{\end{remark}}
\newcommand{\bte}{\begin{theorem}}
\newcommand{\ete}{\end{theorem}}
\newcommand{\bpr}{\begin{proposition}}
\newcommand{\epr}{\end{proposition}}
\newcommand{\ble}{\begin{lemma}}
\newcommand{\ele}{\end{lemma}}
\newcommand{\beq}{\begin{equation}}
\newcommand{\eeq}{\end{equation}}
\newcommand{\bdm}{\begin{displaymath}}
\newcommand{\edm}{\end{displaymath}}
\numberwithin{equation}{section}
\newcommand{\bos}{\begin{remark}\rm}
\newcommand{\eos}{\end{remark}}
\newcommand{\ben}{\begin{enumerate}}
\newcommand{\een}{\end{enumerate}}
\newcommand{\e }{\varepsilon }
\newcommand{\n }{\nabla }
\renewcommand{\t }{\tau }
\newcommand{\be}{\begin{equation}}
\newcommand{\ee}{\end{equation}}
\title[Monotonicity of solutions to some quasilinear problems]
{Monotonicity of solutions to  quasilinear problems with  a first-order 
term in half-spaces}
\author[A.\ Farina]{Alberto Farina$^+$}
\address{Universit\'e de Picardie Jules Verne
\newline\indent
LAMFA, CNRS UMR 6140\newline\indent
Amiens, France}
\email{alberto.farina@u-picardie.fr}
\author[L.\ Montoro]{Luigi Montoro$^*$}
\address{Dipartimento di Matematica
\newline\indent
Universit\`a della Calabria
\newline\indent
Ponte Pietro Bucci 31B, I-87036 Arcavacata di Rende, Cosenza, Italy}
\email{montoro@mat.unical.it}
\author[G.\ Riey]{Giuseppe Riey$^*$}
\address{Dipartimento di Matematica
\newline\indent
Universit\`a della Calabria
\newline\indent
Ponte Pietro Bucci 31B, I-87036 Arcavacata di Rende, Cosenza, Italy}
\email{riey@mat.unical.it}
\author[B.\ Sciunzi]{Berardino Sciunzi$^*$}
\address{Dipartimento di Matematica
\newline\indent
Universit\`a della Calabria
\newline\indent
Ponte Pietro Bucci 31B, I-87036 Arcavacata di Rende, Cosenza, Italy}
\email{sciunzi@mat.unical.it}
\thanks{\it 2000 Mathematics Subject
Classification: 35B05,35B65,35J70}
\thanks{$^+$Universit\'e de Picardie J.Verne,
LAMFA, CNRS UMR 7352,
Amiens, France,
E-mail: {\em alberto.farina@u-picardie.fr}}
\thanks{$^+$ A.F. is supported by the ERC grant EPSILON ({\it Elliptic Pde's and Symmetry of Interfaces and Layers for Odd Nonlinearities)}}
\thanks{$^*$Dipartimento di Matematica,
Universit\`a della Calabria,
Ponte Pietro Bucci 31B, I-87036 Arcavacata di Rende, Cosenza, Italy,
E-mail: {\em montoro@mat.unical.it}, {\em riey@mat.unical.it}, {\em sciunzi@mat.unical.it}}
\thanks{$^*$The authors were partially supported by the
Italian PRIN Research Project 2007: {\em Metodi Variazionali e Topologici nello Studio di Fenomeni non Lineari}}
\begin{document}
\begin{abstract}
We consider a quasilinear elliptic equation involving a first order term, under zero Dirichlet boundary condition in half spaces. We prove that any positive solution is monotone increasing w.r.t. the direction orthogonal to the boundary. The main ingredient in the proof is a new comparison principle in unbounded domains. As a consequence of our analysis, we also obtain some new Liouville type theorems.
\end{abstract}

\maketitle
\tableofcontents

\medskip

\section{Introduction and statement of the  main results.}\label{introdue}

We consider $C^{1,\alpha}$ weak solutions to the problem
\begin{equation}\label{E:P}
\begin{cases}
-{\hbox {\rm div}} (a(u)|\nabla u|^{p-2} \nabla u )+b(u)|\nabla u|^q=f(u), & \text{ in }\mathbb{R}^N_+\\
u(x',y) > 0, & \text{ in } \mathbb{R}^N_+\\
u(x',0)=0,&  \text{ on }\partial\mathbb{R}^N_+
\end{cases}
\end{equation}
where we assume $N\ge2$, $\alpha \in (0,1)$ and
\begin{itemize}
\item [($H_1$)] $1<p<2$,  $1<q\leq p$;
\item [($H_2$)] $a,b$ and $f$ are locally Lipschitz continuous functions on ${\mathbb{R}}$;
\item [($H_3$)] there exists $\gamma>0$ such that $a(s)\ge\gamma$ for every $s \in {\mathbb{R}}$.
\end{itemize}
We denote a generic point in
$\mathbb{R}^N_+$ by $x=(x',y)$ with $x'=(x_1,x_2, \ldots, x_{N-1})$ and $y=x_N$.\\

Note that the $C^{1,\alpha}$ regularity of the solutions follows by the well known results in \cite{Di, LU68, Li, T}.

We study monotonicity properties of the solutions, w.r.t. the $y$-direction, via the Alexandrov-Serrin moving plane method \cite{A,BN,GNN,S}. For the semilinear case, the founding  papers on this topic go back to  the works
\cite{BeCaNi1, BCN1,BCN2,BCN3, Dan1, GiSp}.  
We also refer the readers to \cite{CMS,DaGl,Dan2,DuGuo,Fa,FV2,QS}  for other results concerning the monotonicity of the solutions in half-spaces also in more general settings (always in the uniformly elliptic case). \

In the present work, we consider  the quasilinear problem \eqref{E:P} and we continue the study that we have started in \cite{FMS,FMS3}. \\

The first main contribution of this paper is the following:
\begin{theorem}\label{mainthm}
Let  $u$  be a solution  to \eqref{E:P} and let us assume that $u \in C^{1,\alpha}_{loc}(\overline{\mathbb{R}^N_+})$ and $\n u \in L^{\infty}(\mathbb{R}^N_+)$.  Let $(H_1)$, $(H_2)$ and $(H_3)$ be satisfied and assume that $f(s)>0$ for $s>0$.\\
 \noindent  Then $u$ is monotone increasing w.r.t. the $y$-direction, that is
 \[
\frac{\partial u}{\partial y}\,\geq\,0 \quad \text{in}\quad \mathbb{R}^N_+\,.
 \]
\end{theorem}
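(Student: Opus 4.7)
The plan is to prove the monotonicity via the Alexandrov--Serrin moving planes method applied in the $y$-direction. For $\lambda>0$, set
\[
\Sigma_\lambda \,=\, \{(x',y)\in\mathbb{R}^N_+ : 0<y<\lambda\}, \qquad T_\lambda \,=\, \{y=\lambda\},
\]
and define the reflected solution $u_\lambda(x',y)\,:=\,u(x',2\lambda-y)$. Since the equation in \eqref{E:P} is translation- and reflection-invariant in $y$, $u_\lambda$ satisfies the same PDE in the reflected half-space $\{y<2\lambda\}$. The target inequality is $u\le u_\lambda$ in $\Sigma_\lambda$ for every $\lambda>0$, because letting $\lambda\to+\infty$ at a fixed $(x',y)$ yields $\partial u/\partial y\ge 0$ pointwise.

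The argument splits into the usual two parts. First I would prove that the procedure \emph{starts}: there exists $\lambda_0>0$ such that $u\le u_\lambda$ in $\Sigma_\lambda$ for every $\lambda\in(0,\lambda_0]$. Since $u(x',0)=0$, $u\in C^{1,\alpha}$ up to the boundary and $|\nabla u|\in L^\infty$, the function $u$ is uniformly small on $\Sigma_\lambda$ when $\lambda$ is small, and on the parabolic-type boundary of $\Sigma_\lambda$ one has $u\le u_\lambda$ (trivially on $T_\lambda$ and by positivity of $u_\lambda$ on $\{y=0\}$). Testing the equations for $u$ and $u_\lambda$ against $(u-u_\lambda)^+$ (suitably truncated at infinity in $x'$, to handle the unboundedness of $\Sigma_\lambda$) and exploiting the narrowness of the strip to absorb the zero-order contribution coming from the Lipschitz constant of $f$, gives the starting step; this is the quasilinear/first-order-term analogue of Berestycki--Nirenberg's narrow-strip weak comparison, and relies on the new comparison principle in unbounded domains stated earlier in the paper (and on the techniques of \cite{FMS,FMS3} to handle the degeneracy of the operator for $1<p<2$ and the $|\nabla u|^q$ term).

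Next I would run the \emph{continuation}. Set
\[
\lambda^* \,:=\, \sup\bigl\{\lambda>0 \,:\, u\le u_\mu \text{ in } \Sigma_\mu \text{ for every } 0<\mu\le\lambda\bigr\},
\]
which is positive by the starting step, and assume by contradiction that $\lambda^*<+\infty$. By continuity $u\le u_{\lambda^*}$ in $\Sigma_{\lambda^*}$, and a strong comparison/maximum principle for the linearized operator (again from \cite{FMS,FMS3} combined with the assumption $f(s)>0$ for $s>0$, which rules out the equality case: $u\equiv u_{\lambda^*}$ would force $u(x',2\lambda^*)=u(x',0)=0$, contradicting $u>0$ in $\mathbb{R}^N_+$) yields the \emph{strict} inequality $u<u_{\lambda^*}$ in $\Sigma_{\lambda^*}$. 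From here one needs to push the inequality slightly beyond $\lambda^*$: given $\bar\lambda>\lambda^*$ close to $\lambda^*$, decompose $\Sigma_{\bar\lambda}$ into a compact part $K\subset\Sigma_{\lambda^*}$, where $u_{\lambda^*}-u$ is strictly positive with a quantitative lower bound (hence by continuity $u_{\bar\lambda}-u>0$ on $K$ for $\bar\lambda$ near $\lambda^*$), and a thin/remaining region $\Sigma_{\bar\lambda}\setminus K$ where the narrow-strip comparison principle in unbounded domains applies and delivers $u\le u_{\bar\lambda}$. This contradicts the definition of $\lambda^*$.

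The main obstacle is the compact/non-compact decomposition in the continuation step, because $\Sigma_{\lambda^*}$ is unbounded in $x'$: one cannot exhaust it by compacts while keeping a uniform strict positivity of $u_{\lambda^*}-u$. The essential new tool is precisely the comparison principle in unbounded domains announced in the abstract; its role is to replace the usual compactness argument of Berestycki--Caffarelli--Nirenberg by a direct weighted-integral comparison on strips of small height, valid globally in $x'$, which is robust enough to accommodate both the degeneracy of $a(u)|\nabla u|^{p-2}\nabla u$ for $1<p<2$ and the superlinear gradient term $b(u)|\nabla u|^q$ with $1<q\le p$. Once this unbounded-domain comparison principle is in hand, the two steps above close the argument.
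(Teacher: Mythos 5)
Your skeleton (start in narrow strips, define $\lambda^*$, continue by contradiction) matches the paper's overall scheme, and the starting step via the narrow-strip comparison principle in unbounded domains (Theorem \ref{th:COROwcpstrip}) is correct. The gap is in the continuation step, which is where all the difficulty lies for $1<p<2$. You invoke a ``strong comparison/maximum principle for the linearized operator'' to upgrade $u\le u_{\lambda^*}$ to the strict inequality $u<u_{\lambda^*}$ in $\Sigma_{\lambda^*}$. For the singular operator this is only available away from the critical set $Z_u=\{\nabla u=0\}$, or globally under the restriction $p>\frac{2N+2}{N+2}$ via \cite{DS2}; that restriction is precisely what the theorem is designed to remove, and it is not clear those results apply at all in the presence of the first-order term $b(u)|\nabla u|^q$. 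In particular your dismissal of the equality case is not justified: the strong comparison principle only propagates $u\equiv u_{\lambda^*}$ within a connected component of $\Sigma_{\lambda^*}\setminus Z_u$, so one may have \emph{local} symmetry regions without being able to conclude $u(x',2\lambda^*)=0$. The paper's substitute for this step is Theorem \ref{trittofritto}: any local symmetry region must touch $\{y=0\}$, proved through translated limiting profiles, Harnack's inequality, the classification of positive $p$-harmonic functions in the half-space vanishing on the boundary (\cite{KSZ}, forcing $\bar u=ky$), and a weighted test-function argument. None of this machinery appears in your proposal.

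Your fix for the unboundedness in $x'$ also does not close the argument. A comparison principle valid on strips of small height (your ``weighted-integral comparison on strips of small height, valid globally in $x'$'', i.e.\ Theorem \ref{th:COROwcpstrip}) requires the boundary inequality $u\le u_{\bar\lambda}$ on the cut $\{y=\lambda^*-\delta\}$, and this is exactly the uniform separation that the missing strong comparison principle was supposed to provide; so the narrow-strip principle alone cannot carry the continuation (this is why \cite{FMS} needed $p>\frac{2N+2}{N+2}$). What the paper actually uses is the finer Theorem \ref{th:wcpstrip}, valid in domains whose $y$-sections split into a part of small measure plus a part where \emph{both gradients are small}, together with Proposition \ref{lellalemma}, which localizes the support of $(u-u_{\bar\lambda+\varepsilon})^+$ into $\{0\le y\le\delta\}\cup\{\bar\lambda-\delta\le y\le\bar\lambda+\varepsilon\}$ plus such a small-gradient set; Proposition \ref{lellalemma} in turn rests on Theorem \ref{trittofritto}. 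You would also need the preliminary change of variable $w=A(u)$ reducing to $a\equiv1$, and the remark that the Dirichlet condition together with $\nabla u\in L^\infty$ gives boundedness of $u$ on strips so the comparison theorems are applicable. Without the small-gradient decomposition and the boundary-touching property of local symmetry regions, the proof as you wrote it does not go through on the full range $1<p<2$.
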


For $a(\cdot)=1$ and $b(\cdot)=0$ problem \eqref{E:P} reduces to
\begin{equation}\label{E:Pbisygfjsdg}
\begin{cases}
-\Delta_p\,u=f(u), & \text{ in }\mathbb{R}^N_+\\
u(x',y) > 0, & \text{ in } \mathbb{R}^N_+\\
u(x',0)=0,&  \text{ on }\partial\mathbb{R}^N_+\,.
\end{cases}
\end{equation}

The monotonicity of solutions to \eqref{E:Pbisygfjsdg} was first studied in \cite{DS3} in the two dimensional case and considering positive nonlinearities. Later, in dimension $N$ (always for positive nonlinearities) a first result was obtained in \cite{FMS}. Both the results hold  under the restriction $\frac{2N+2}{N+2}<p<2$.\\

As a consequence of Theorem \ref{mainthm}, we can remove this restriction and get the following:\

 \begin{corollary}\label{mainthmdgbdioug}
Let $1<p< 2$ and $u \in C^{1,\alpha}_{loc}({\overline {{\mathbb{R}^N_+}}})$  be a solution of problem \eqref{E:Pbisygfjsdg} with  $|\n u| \in L^{\infty}(\mathbb{R}^N_+)$. Assume that $f$ is locally Lipschitz continuous with $f(s)>0$ for $s>0$.  Then $u$ is monotone increasing w.r.t. the $y$-direction.
\end{corollary}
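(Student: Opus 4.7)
The plan is to apply Theorem \ref{mainthm} directly, by choosing the coefficient functions so that equation \eqref{E:Pbisygfjsdg} fits into the framework of \eqref{E:P}. Specifically, I would set $a(s)\equiv 1$ and $b(s)\equiv 0$, which reduces the operator $-\operatorname{div}(a(u)|\nabla u|^{p-2}\nabla u)+b(u)|\nabla u|^q$ to the standard $p$-Laplacian $-\Delta_p u$. Since the first-order term vanishes identically, the only task is to verify that hypotheses $(H_1)$--$(H_3)$ are satisfied.

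For $(H_1)$, the assumption $1<p<2$ is part of the corollary's statement, while the condition $1<q\le p$ is handled by picking any admissible exponent, for instance $q=p$: this is harmless because the term $b(u)|\nabla u|^q$ is identically zero regardless of the value of $q$. For $(H_2)$, the constant functions $a\equiv 1$ and $b\equiv 0$ are trivially locally Lipschitz, and the local Lipschitz continuity of $f$ is explicitly assumed. Hypothesis $(H_3)$ holds with $\gamma=1$. Finally, positivity of $f$ on $(0,+\infty)$, the $C^{1,\alpha}_{loc}(\overline{\mathbb{R}^N_+})$ regularity of $u$, and the boundedness of $\nabla u$ all match the requirements of Theorem \ref{mainthm}.

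There is essentially no technical obstacle: the theorem was formulated precisely to cover this broader class of equations, and the corollary is a direct specialization. The substantive content lies entirely in Theorem \ref{mainthm}, and the point of stating the corollary is that, for the pure $p$-Laplacian with positive nonlinearity, one can now drop the restriction $\frac{2N+2}{N+2}<p<2$ imposed in \cite{DS3,FMS} and obtain monotonicity for the full range $1<p<2$.
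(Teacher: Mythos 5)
Your proposal is correct and coincides with the paper's own treatment: the corollary is stated precisely as an immediate specialization of Theorem \ref{mainthm} obtained by taking $a\equiv 1$, $b\equiv 0$ (with any admissible exponent, e.g.\ $q=p$), under which hypotheses $(H_1)$--$(H_3)$ hold trivially. No further argument is needed beyond the verification you give.
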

Let us point out that in \cite{FMS}  the restriction $\frac{2N+2}{N+2}<p<2$ is needed because it is used there the strong maximum and comparison principles of \cite{DS2}, which, in turn,  are based on the estimates in \cite{DS1}. A novelty in this paper, even in the special case of the pure $p$-Laplacian operator, is the fact that we avoid the restriction $\frac{2N+2}{N+2}<p<2$.

Let us also mention that the case $p=2$ is well known, as remarked here above, while recently in \cite{FMS3} the monotonicity of solutions to \eqref{E:Pbisygfjsdg} is proved in the case $2< p<3$, for  positive power-like nonlinearities or in the case $p>2$, for strictly positive nonlinearities.\\

The technique exploited in the proof of Theorem \ref{mainthm} also allows us to improve Theorem~1.8 in \cite{FMS} allowing the presence of a first order term in the equation and avoiding also in this case the restriction $p>\frac{2N+2}{N+2}$. Namely we have the following:

\begin{theorem}\label{mainthmnonpositive}
Let $(H_1)$ and $(H_2)$  be satisfied.  Assume that $u \in C^{1,\alpha}_{loc}(\overline{\mathbb{R}^N_+})$, with $ u \in W^{1,\infty}(\mathbb{R}^N_+)$,  is a solution to \eqref{E:P} with $a\equiv1$ and $b(u)\geq 0$.  Suppose that
$$ \exists \, z > 0 \quad : \quad 0 < s < z \quad \Rightarrow \quad f(s)
> 0 \qquad \text{and }\qquad  s > z \quad \Rightarrow \quad f(s) <0.$$
 Then $u$ is monotone increasing w.r.t. the $y$-direction.
\end{theorem}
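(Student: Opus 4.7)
The strategy is to reduce the statement to Theorem \ref{mainthm} by first proving the a priori $L^\infty$-bound $\sup_{\mathbb{R}^N_+} u\le z$. Once this is secured, $f(u)\ge 0$ in $\mathbb{R}^N_+$, with $f(u)>0$ near $\partial\mathbb{R}^N_+$, and the moving-plane machinery developed for Theorem \ref{mainthm} can be applied with no essential modification.

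\textbf{Step 1: the bound $u\le z$.} Set $M:=\sup u$, which is finite since $u\in W^{1,\infty}$, and argue by contradiction, assuming $M>z$. Pick $x_n=(x_n',t_n)\in\mathbb{R}^N_+$ with $u(x_n)\to M$ and translate in $x'$ by $u_n(x',y):=u(x'+x_n',y)$. Thanks to the $C^{1,\alpha}_{loc}$ estimates of \cite{Di,Li,T} and the uniform $W^{1,\infty}$ bound, up to a subsequence $u_n\to u_\infty$ in $C^1_{loc}$, where $u_\infty$ solves the same equation. If $t_n\to t_\infty\ge 0$ is bounded, $u_\infty$ is defined on $\overline{\mathbb{R}^N_+}$, vanishes on the boundary and satisfies $u_\infty(0,t_\infty)=M$; the boundary condition immediately rules out $t_\infty=0$. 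If $t_n\to\infty$, translating also in the $y$-variable by $t_n$ yields, via the same compactness, a limit $u_\infty$ defined on all of $\mathbb{R}^N$ with $u_\infty(0)=M$. In both remaining cases, $u_\infty$ attains the value $M$ at an interior point; the strong maximum principle for the operator $-\Delta_p\cdot+b(\cdot)|\nabla\cdot|^q$---applicable thanks to $b\ge 0$---forces $u_\infty\equiv M$, so $\nabla u_\infty\equiv 0$ and the equation gives $f(M)=0$. The sign structure of $f$ leaves only $M\in\{0,z\}$, a contradiction.

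\textbf{Step 2: moving planes.} With $u\le z$ on $\mathbb{R}^N_+$, we have $f(u)\ge 0$ everywhere, and since $u(x',y)\to 0$ as $y\to 0$, in fact $f(u)>0$ on a neighborhood of the boundary. For $\lambda>0$ set $u_\lambda(x',y):=u(x',2\lambda-y)$ and $\Sigma_\lambda:=\{0<y<\lambda\}$. For small $\lambda$, the new comparison principle in unbounded domains introduced earlier in the paper, applied in the strip $\Sigma_\lambda$ where $f(u)>0$, gives $u\le u_\lambda$. Defining
\[
\Lambda:=\sup\{\lambda>0:\ u\le u_\mu\ \text{in}\ \Sigma_\mu\ \text{for every}\ \mu\in(0,\lambda]\},
\]
the continuation argument of Theorem \ref{mainthm} now applies verbatim: the only property of $f$ that it uses is the nonnegativity $f(u)\ge 0$ together with strict positivity where $u$ is small, both guaranteed by Step 1. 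Hence $\Lambda=+\infty$ and $\partial u/\partial y\ge 0$ in $\mathbb{R}^N_+$.

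\textbf{Main obstacle.} The delicate point is Step 1, specifically the strong maximum principle for the blow-up limit $u_\infty$ when $1<p<2$: the $p$-Laplacian is singular at critical points and the first-order term $b(u)|\nabla u|^q$ must be absorbed into the argument. The sign hypothesis $b(\cdot)\ge 0$ is precisely what turns this term into a contribution with the correct sign, allowing a V\'azquez-type SMP to apply and force $u_\infty\equiv M$. Once the $L^\infty$-bound $u\le z$ is in hand, Step 2 requires no new ideas beyond those already used to prove Theorem \ref{mainthm}.
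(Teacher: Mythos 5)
Your Step 1 is fine in substance, and it is in fact a different route from the paper, which obtains $0<u\leq z$ simply by observing that $b(u)\geq 0$ gives $-\Delta_p u\leq f(u)$ and then citing Theorem 1.7 of \cite{FMS}; your translation/compactness argument re-proves that bound. One point you should state explicitly: the constancy of the blow-up limit does not come from $b\geq 0$ alone, but from the fact that near the interior maximum one has $u_\infty>z$, hence $f(u_\infty)<0$, so that (using $b\geq 0$) $u_\infty$ is $p$-subharmonic there and the classical strong maximum principle applies on the (open and closed) set $\{u_\infty=M\}$.

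The genuine gap is in Step 2, in the claim that the proof of Theorem \ref{mainthm} uses only ``$f(u)\geq 0$ together with strict positivity where $u$ is small''. That is not what the machinery needs. The moving-plane continuation rests on Proposition \ref{lellalemma}, which in turn rests on Theorem \ref{trittofritto}; in the final part of the proof of Theorem \ref{trittofritto} the crucial estimate is \eqref{fffffff}, namely $f(u)\geq\gamma^{+}>0$ on the local symmetry region $\mathcal U$, where Claim 2 only guarantees $u\geq\gamma>0$ --- $u$ need not be small on $\mathcal U$. Under the present hypotheses continuity forces $f(z)=0$, so after your Step 1 you only know $0<u\leq z$, and if $u$ takes values arbitrarily close to $z$ on $\mathcal U$ then $\inf_{\mathcal U}f(u)$ may vanish; the constant $\tilde C=\gamma^{+}-\bar\sigma\,\|b^{+}(u)\|_{\infty}$ can then no longer be made positive and the contradiction for large $R$ (hence Theorem \ref{trittofritto}, hence Proposition \ref{lellalemma}) collapses. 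This is exactly why the paper inserts an intermediate step that your proposal omits: first $u<z$ strictly by the strong maximum principle, and then, by a limiting-profile argument as in case a) of Theorem \ref{trittofritto} (a translated limit $u_\infty\leq z$ touching $z$ at an interior point is excluded by the strong maximum principle), $u$ is \emph{uniformly} bounded away from $z$ on every strip $\Sigma_\lambda$. With $\gamma\leq u\leq z-\epsilon$ on the relevant strip one recovers $f(u)\geq\min_{[\gamma,z-\epsilon]}f>0$ and only then does the proof of Theorem \ref{mainthm} go through. Your argument can be repaired by adding precisely this step (your Step 1 compactness technique is adequate for it), but as written the reduction ``verbatim'' to Theorem \ref{mainthm} does not hold.
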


Let us emphasize some new Liouville type results that complement and improve those we proved in \cite[Theorem 1.6]{FMS}. They follow from our monotonicity results and some techniques used in \cite {FMS}:

\begin{theorem}\label{liouvillenextgenerationtris}
Assume $1<p<2$ and let $f$ be locally Lipschitz continuous. Let $u\in C^{1,\alpha}_{loc}(\overline{\R^N_+})\cap W^{1,\infty}(\R^N_+)$ be a non-negative solution of

\begin{equation}\label{E:nonN}
\begin{cases}
-\Delta_p\,u =f(u), & \text{ in }\mathbb{R}^N_+\\
u(x',0)=0,&  \text{ on }\partial\mathbb{R}^N_+\,.
\end{cases}
\end{equation}


Assume that one of the following holds:

\medskip

\begin{itemize}
\item[a)] $N=2$ and  $f(s)>0$ for $s>0$, with $f(0) =0$,

  \

\item[b)] $N\geqslant 3$,
$f(s)>0$ for $s>0$, $f(0) =0$ and  $f$ is subcritical w.r.t. the Sobolev critical exponent in $\mathbb{R}^{N-1}$,

\

\item[c)] $N\geqslant 3$,
$f(s)>0$ for $s>0$, $f(0) =0$ and  $f(s)\geq \lambda s^\frac{(N-1)(p-1)}{N-1-p}$ in $[0,\delta]$, for some $\lambda,\delta>0$.
\end{itemize}

\smallskip

\noindent Then $u\equiv0$. \

On the other hand, if $ N\geqslant 2$, $f(s)>0$ for $s\ge 0$, then there are no non-negative solutions of \eqref{E:nonN}.

\end{theorem}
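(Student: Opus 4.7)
The plan is to combine Corollary~\ref{mainthmdgbdioug} with a dimension-reduction procedure and Liouville-type theorems on $\mathbb{R}^{N-1}$.

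\textbf{Monotonicity and the limit profile.} Suppose, by contradiction, that $u \not\equiv 0$. Since $f(u) \geq 0$, V\'azquez's strong maximum principle for the $p$-Laplacian gives $u > 0$ in $\mathbb{R}^N_+$, and Corollary~\ref{mainthmdgbdioug} then yields $\partial_y u \geq 0$. Because $u \in W^{1,\infty}(\mathbb{R}^N_+)$, the monotone pointwise limit
\[
\tilde u(x') := \lim_{y \to +\infty} u(x',y)
\]
exists, is bounded, and satisfies $\tilde u(x') \geq u(x',1) > 0$ for every $x'\in\mathbb{R}^{N-1}$. Considering the translates $u_n(x',y):=u(x',y+n)$, uniform interior $C^{1,\alpha}$ estimates (Tolksdorf--DiBenedetto--Lieberman) give, along a subsequence, $u_n \to \tilde u$ in $C^1_{loc}(\mathbb{R}^N)$. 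Passing to the limit in the weak formulation, and using that $\tilde u$ is $y$-independent, one obtains
\[
-\Delta_p^{x'}\,\tilde u \;=\; f(\tilde u) \quad\text{in } \mathbb{R}^{N-1},\qquad \tilde u > 0,\qquad \tilde u \in W^{1,\infty}(\mathbb{R}^{N-1}).
\]

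\textbf{Liouville on $\mathbb{R}^{N-1}$.} In case (a), $\mathbb{R}^{N-1}=\mathbb{R}$ and multiplication by $\tilde u'$ yields the first integral $\tfrac{p-1}{p}|\tilde u'|^{p}+F(\tilde u)=C$, with $F(t)=\int_0^t f$. Since $F$ is strictly increasing on $[0,\|\tilde u\|_\infty]$, any bounded non-constant trajectory is excluded; hence $\tilde u$ is constant, and then $f(\tilde u)=0$ forces $\tilde u\equiv 0$, contradicting $\tilde u>0$. In case (b), the subcritical growth of $f$ triggers the Serrin--Zou Liouville theorem for bounded positive solutions of $-\Delta_p w=f(w)$ on $\mathbb{R}^{N-1}$, giving again $\tilde u\equiv 0$. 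In case (c), because $\tilde u$ is bounded and $f>0$ on $(0,\infty)$, the hypothesis $f(s)\geq\lambda s^{q_*}$ on $[0,\delta]$, with $q_*=(N-1)(p-1)/(N-1-p)$, extends by continuity and compactness to $f(s)\geq c\,s^{q_*}$ on the whole interval $[0,\|\tilde u\|_\infty]$, for some $c>0$. Thus $-\Delta_p^{x'}\tilde u\geq c\,\tilde u^{q_*}$ in $\mathbb{R}^{N-1}$, and Serrin's Liouville theorem at the Serrin exponent $q_*$ of $\mathbb{R}^{N-1}$ yields $\tilde u\equiv 0$, again a contradiction.

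\textbf{The case $f>0$ on $[0,\infty)$.} The monotonicity and the reduction step apply verbatim, producing $\tilde u$ bounded with $f(\tilde u)\geq c_0:=\min_{[0,\|u\|_\infty]} f>0$. Integrating the equation on a ball $B_R\subset\mathbb{R}^{N-1}$ and using the divergence theorem gives
\[
c_0\,|B_R|\;\leq\;\int_{B_R} f(\tilde u)\;=\;-\int_{\partial B_R}|\nabla \tilde u|^{p-2}\nabla \tilde u\cdot\nu\;\leq\;\|\nabla \tilde u\|_\infty^{\,p-1}\,\mathcal{H}^{N-2}(\partial B_R),
\]
which is impossible for $R$ large, proving the non-existence part.

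\textbf{Main obstacle.} The subtlest point is case (c): the assumed power lower bound on $f$ holds only on $[0,\delta]$, whereas $\tilde u$ a priori ranges over all of $[0,\|\tilde u\|_\infty]$. Upgrading the bound globally relies on the combination of $f>0$ on $(0,\infty)$ with the $L^\infty$-bound on $\tilde u$, and on the fact that $q_*$ is precisely the Serrin exponent of $\mathbb{R}^{N-1}$, for which the classical Liouville inequality $-\Delta_p w\geq c\,w^{q_*}$ admits no positive bounded solution. The other technical care is needed in justifying the $C^1_{loc}$-convergence $u_n\to\tilde u$ and the passage to the limit in the weak formulation when $1<p<2$, which is possible without the restriction $p>(2N+2)/(N+2)$ precisely because Corollary~\ref{mainthmdgbdioug} itself has been freed from that restriction.
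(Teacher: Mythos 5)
Your proposal is correct and follows essentially the same route as the paper: strong maximum principle plus Theorem \ref{mainthm}/Corollary \ref{mainthmdgbdioug} for monotonicity, passage to the limit profile as $y\to\infty$ to reduce to a positive bounded solution on $\mathbb{R}^{N-1}$, an ODE first-integral/concavity argument for $N=2$, and the Liouville theorems of \cite{MP,SZ} for cases b) and c) — steps the paper delegates to Theorem 1.6 of \cite{FMS}. The only (harmless) deviations are that in the final non-existence statement you integrate $-\Delta_p \tilde u \ge c_0>0$ over large balls instead of reducing to case c) as the paper does for $N\ge 3$, and that you spell out the extension of the lower bound $f(s)\ge \lambda s^{(N-1)(p-1)/(N-1-p)}$ from $[0,\delta]$ to the full range of $\tilde u$, a detail the paper leaves to the cited references.
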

We refer the readers to \cite{DS3, FMS,FMS3,Zou} for other Liouville type theorems for quasilinear elliptic equations in half-spaces. \\

The proofs of the monotonicity results in half spaces are generally based on weak comparison principles in narrow unbounded domains. We refer the readers to  \cite{BCN1,BCN2,BCN3,DaGl,Dan1,Dan2,Fa,FV2,QS}.

In our case, the presence of the therm $|\nabla u|^{p-2}$ gives rise to a phenomenon that was first pointed out in \cite{lucio,DP}, in the case of bounded domains. Namely, we will prove our monotonicity result via a weak comparison principle
in  domains that can be decomposed into two parts. A narrow part (w.r.t. the Lebesgue measure of the section) and a part where the gradient of the solution is small.

We will state our comparison principle, which is the second main contribution of the present work, under more general structural assumptions and for more general quasilinear problems (cfr.\eqref{Eq:WCP} below). We also remark that no restrcition on the sign of $u$ and $v$ is required. More precisely, we consider continuous functions $a=a(x,u),b=b(x,u)$ and $f=f(x,u)$ defined on ${\mathbb{R}}^N_+ \times {\mathbb{R}}$ and satisfying the following conditions:
\begin{itemize}
\item [($H_4$)] $a, b $ and $f$ are locally Lipschitz continuous functions, uniformly w.r.t. $x$. Namely, for every $M>0$,  there are positive constants $L_a=L_a(M)\,\,,\,\,L_b=L_b(M)$ and $L_f=L_f(M)$ such that for every $x \in {\mathbb{R}}^N_+$ and every $ u,v \in [-M,M] $ we have :
$$ \vert a(x,u) - a(x,v) \vert \le L_a \vert u-v \vert, \qquad \vert b(x,u) - b(x,v) \vert \le L_b \vert u-v \vert, $$
$$ \vert f(x,u) - f(x,v) \vert \le L_f \vert u-v \vert. $$
For every $M>0$ there is a constant $K=K(M)>0$ such that for every $x \in {\mathbb{R}}^N_+$ and every $ s \in [-M,M] $ we have :

$$ \vert a(x,s) \vert \le K, \qquad \vert b(x,s) \vert \le K. $$

\item [($H_5$)]  There is a constant $\gamma > 0 $ such that $a(x,s)\geq\gamma$ for every $(x,s) \in {\mathbb{R}^N_+} \times \mathbb{R}$.
\end{itemize}

\

\begin{remark} (i) When the functions $a$, $b$ and $f$ depend only on the variable $u$ and are locally Lipschitz continuous on $ \mathbb{R}$, the assumption $(H_4)$ is automatically satisfied.

\smallskip

\noindent (ii) Typical examples of functions $a=a(x,u)$ satisfying both $(H_4)$ and $(H_5)$ are provided by $ a(x,u) = (\sin^2(x_1) +10) (\vert u \vert +1)$ or $ a(x,u) = \vert u \vert +2 + \cos^2(\vert x \vert)$.
\end{remark}

\medskip

We have the following:

\begin{theorem}\label{th:wcpstrip}
Let $1<p<2$, $N \geq 2$ and let $(H_1),(H_4),(H_5)$ be satisfied. Fix $\lambda_0 >0$ and $M_0>0$. Consider $\lambda \in (0, \lambda_0]$, $\tau,\varepsilon>0$ and set
\begin{equation}\label{eq:gadkgdfakhsjgfasfgjhksagfjfgjasgjh}
\Sigma_{(\lambda, y_0)}:= \mathbb{R}^{N-1}\times \big(y_0-\frac{\lambda}{2}, y_0+\frac{\lambda}{2}\big), \quad y_0 \geq \frac{\lambda}{2}.
\end{equation}
Let $u, v \in C^{1, \alpha}_{loc}(\overline{\Sigma_{(\lambda, y_0)}})$ such that $ \Vert u \Vert_{\infty} + \Vert \nabla u \Vert_{\infty} \le M_0$, $ \Vert v \Vert_{\infty} + \Vert \nabla v \Vert_{\infty} \le M_0$ and
\begin{equation}\label{Eq:WCP}
\begin{cases}
-{\hbox {\rm div}} (a(x,u)|\nabla u|^{p-2} \nabla u )+b(x,u)|\nabla u|^q\leq f(x,u), & \text{ in }\Sigma_{(\lambda, y_0)},\\
-{\hbox {\rm div}} (a(x,v)|\nabla v|^{p-2} \nabla v )+b(x,v)|\nabla v|^q\geq f(x,v), & \text{ in }\Sigma_{(\lambda, y_0)},\\
u\leq v, & \text{ on } \partial\mathcal{S}_{(\t,\varepsilon)}\,,
\end{cases}
 \end{equation}
where  the open set $\mathcal{S}_{(\t,\varepsilon)}\subseteq \Sigma_{(\lambda, y_0)}$ is such that
$$
\mathcal{S}_{(\t,\varepsilon)}=
 \bigcup_{x'\in\mathbb R^{N-1}} I_{x'}^{\t,\eps}\,,
$$
and  the open set $I_{x'}^{\t,\eps}\subseteq \{x'\}\times(y_0-\frac{\lambda}{2}, y_0+\frac{\lambda}{2})$ has the form
\begin{equation}\label{split}
I_{x'}^{\t,\eps}=A_{x'}^\t\cup B_{x'}^\eps\qquad \text{with}\quad |A_{x'}^\t\cap B_{x'}^\eps|=0
\end{equation}
and, for $x'$ fixed,   $A_{x'}^\t\,,\,B_{x'}^\varepsilon\subset(y_0-\frac{\lambda}{2}, y_0+\frac{\lambda}{2})$ are measurable  sets such that
\begin{center}
$|A_{x'}^\t|\le \t \quad$ and
$\quad B_{x'}^\eps \subseteq\left\{y\in\R:\, |\nabla u(x',y)|<\eps,\,|\nabla v(x',y)|<\eps\right\}$.
\end{center}
Then there exist $$\tau_0=\tau_0(N,p,q,\lambda_0, M_0, \gamma)>0$$
and
$$\eps_0=\eps_0(N,p,q,\lambda_0, M_0, \gamma)>0 \footnote{Both $\eps_0$ and $\tau_0$ can be explicitely calculated, cfr. Remark \ref{valoriexpl}.}$$
such that, if  $0<\tau<\tau_0$ and $0<\eps<\eps_0$, it follows that
$$
u \leq v \qquad \text{ in } \quad \mathcal{S}_{(\t,\varepsilon)}.
$$
If the functions $f$, $a$ and $b$ are assumed to be \emph{globally} Lipschitz continuous on $ {\mathbb{R}^N_+} \times {\mathbb{R}}$ and the functions $a$ and $b$ are supposed to be \emph{bounded} on $ {\mathbb{R}^N_+} \times {\mathbb{R}}$, the same conclusion holds true without any assumption on the boundedness of $u$ and $v$.
\end{theorem}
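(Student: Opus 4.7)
The plan is to use a weak comparison argument in the spirit of the narrow-domain WCP for the singular $p$-Laplacian, adapted to an unbounded strip. Setting $w:=(u-v)^{+}$, which vanishes on $\partial\mathcal{S}_{(\t,\eps)}$ by hypothesis, and letting $\varphi_R\in C^{\infty}_c(\mathbb{R}^{N-1})$ be a standard cutoff with $\varphi_R\equiv 1$ on $\{|x'|\le R\}$ and $|\nabla\varphi_R|\le C/R$, I test the difference of the two weak inequalities in \eqref{Eq:WCP} against $w\,\varphi_R^{2}$. Using the decomposition
\[
a(x,u)|\nabla u|^{p-2}\nabla u-a(x,v)|\nabla v|^{p-2}\nabla v=a(x,u)\bigl(|\nabla u|^{p-2}\nabla u-|\nabla v|^{p-2}\nabla v\bigr)+\bigl(a(x,u)-a(x,v)\bigr)|\nabla v|^{p-2}\nabla v,
\]
(and the analogous one for the $b$-term, together with $\bigl|\,|\nabla u|^{q}-|\nabla v|^{q}\bigr|\le q(|\nabla u|+|\nabla v|)^{q-1}|\nabla w|$), combined with the classical pointwise inequalities for $1<p<2$
\[
(|\xi|^{p-2}\xi-|\eta|^{p-2}\eta)\cdot(\xi-\eta)\ge C_p(|\xi|+|\eta|)^{p-2}|\xi-\eta|^{2},\quad \bigl|\,|\xi|^{p-2}\xi-|\eta|^{p-2}\eta\,\bigr|\le C_p(|\xi|+|\eta|)^{p-2}|\xi-\eta|,
\]
with the ellipticity $a\ge\gamma$ from $(H_5)$, the Lipschitz bounds from $(H_4)$, and a weighted Young inequality (absorbing cross terms such as $w(|\nabla u|+|\nabla v|)^{p-1}|\nabla w|$ and $w(|\nabla u|+|\nabla v|)^{q-1}|\nabla w|$ into the coercive side by means of the bounds $|\nabla u|,|\nabla v|\le M_0$), I arrive at the energy inequality
\[
\gamma\,C_p\int\varphi_R^{2}(|\nabla u|+|\nabla v|)^{p-2}|\nabla w|^{2}\,dx\le \bar C\int\varphi_R^{2}w^{2}\,dx+\mathcal{E}_R,
\]
where $\bar C=\bar C(N,p,q,\lambda_0,M_0,\gamma,L_a,L_b,L_f,K)$ and $\mathcal{E}_R$ is the boundary contribution generated by $\nabla\varphi_R$.

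The core of the proof is to bound $\int\varphi_R^{2}w^{2}$ by the coercive integral with a prefactor small in $\t,\eps$, exploiting the decomposition $I_{x'}^{\t,\eps}=A_{x'}^{\t}\cup B_{x'}^{\eps}$. For each fixed $x'$, since $w(x',\cdot)$ vanishes at the endpoints of $I_{x'}^{\t,\eps}$, the fundamental theorem of calculus and Cauchy--Schwarz yield
\[
|w(x',y)|\le |A_{x'}^{\t}|^{1/2}\|\partial_y w\|_{L^{2}(A_{x'}^{\t})}+|B_{x'}^{\eps}|^{1/2}\|\partial_y w\|_{L^{2}(B_{x'}^{\eps})}.
\]
On $A_{x'}^{\t}$, the bound $|\nabla u|+|\nabla v|\le 2M_0$ and $p-2<0$ give $|\partial_y w|^{2}\le(2M_0)^{2-p}(|\nabla u|+|\nabla v|)^{p-2}|\nabla w|^{2}$; crucially, on $B_{x'}^{\eps}$ the stronger bound $|\nabla u|+|\nabla v|\le 2\eps$ (again with $p-2<0$) upgrades this to $|\partial_y w|^{2}\le(2\eps)^{2-p}(|\nabla u|+|\nabla v|)^{p-2}|\nabla w|^{2}$. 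Squaring, integrating in $y\in I_{x'}^{\t,\eps}$ (of length at most $\lambda_0$) and then against $\varphi_R^{2}$ in $x'$ produces
\[
\int\varphi_R^{2}w^{2}\,dx\le \kappa(\t,\eps)\int\varphi_R^{2}(|\nabla u|+|\nabla v|)^{p-2}|\nabla w|^{2}\,dx,\quad \kappa(\t,\eps):=2\lambda_0(2M_0)^{2-p}\t+2\lambda_0^{2}(2\eps)^{2-p}.
\]
Since $2-p>0$, $\kappa(\t,\eps)\to 0$ as $\t,\eps\to 0^{+}$. I choose $\t_0,\eps_0$, explicitly in terms of the data (justifying the footnote), so that $\bar C\,\kappa(\t,\eps)\le\gamma C_p/2$ for all $\t\in(0,\t_0)$, $\eps\in(0,\eps_0)$.

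Combining, $\frac12\gamma C_p\int\varphi_R^{2}(|\nabla u|+|\nabla v|)^{p-2}|\nabla w|^{2}\,dx\le \mathcal{E}_R$, and letting $R\to\infty$ monotone convergence on the left forces $\int_{\mathcal{S}_{(\t,\eps)}}(|\nabla u|+|\nabla v|)^{p-2}|\nabla w|^{2}\,dx=0$ as soon as $\mathcal{E}_R\to 0$. Together with $w=0$ on $\partial\mathcal{S}_{(\t,\eps)}$ this gives $w\equiv 0$, i.e.\ $u\le v$ on $\mathcal{S}_{(\t,\eps)}$. The main obstacle is precisely the control of $\mathcal{E}_R$ in the unbounded tangential direction: a brute-force bound only yields $\mathcal{E}_R=O(R^{N-2})$, which does not vanish for $N\ge 3$. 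The resolution is to re-split $\mathcal{E}_R$ using the same decomposition as above, reabsorbing the piece containing $|\nabla w|$ into the coercive left-hand side via a further Young inequality and estimating the residual (bounded by $L_a\,w^{2}|\nabla v|^{p-1}|\nabla\varphi_R|$) through the same $A^{\t}\cup B^{\eps}$ splitting on the support of $\nabla\varphi_R$; this supplies an extra smallness factor of order $\t+\eps^{2-p}$ and drives $\mathcal{E}_R\to 0$. The final assertion of the theorem, without the $L^{\infty}$-assumption on $u,v$, follows by running the same argument with the Lipschitz truncation $T_K(w):=\min\{w,K\}$ in place of $w$ and letting $K\to\infty$, the uniform constants being furnished by the global Lipschitz and boundedness assumptions on $f$, $a$, $b$.
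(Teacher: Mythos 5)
Your core mechanism — the one-dimensional Poincar\'e inequality on each fiber $I_{x'}^{\t,\eps}=A_{x'}^{\t}\cup B_{x'}^{\eps}$ with the degenerate weight $(|\nabla u|+|\nabla v|)^{p-2}$, yielding a constant $\kappa(\t,\eps)\sim \lambda_0(2M_0)^{2-p}\t+\lambda_0^{2}(2\eps)^{2-p}$ that is small for small $\t,\eps$ — is exactly the paper's Lemma \ref{poincare 1D lemma} and is the right idea. The gap is in how you dispose of the cutoff contribution $\mathcal{E}_R$. You correctly observe that with the test function $(u-v)^{+}\varphi_R^{2}$ one only gets $\mathcal{E}_R=O(R^{N-2})$, but your proposed fix is a non sequitur: re-splitting $\mathcal{E}_R$ over $A\cup B$ produces a factor that is small \emph{in $\t$ and $\eps$}, not decaying \emph{in $R$}, so a fixed small multiple of $R^{N-2}$ still blows up for $N\ge 3$. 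Worse, the piece of $\mathcal{E}_R$ you propose to ``reabsorb'' lives on the annulus $\{R\le|x'|\le 2R\}$, where $\varphi_R<1$; after the Poincar\'e step it is controlled by the \emph{unweighted-by-$\varphi_R^2$} energy on that annulus, which is not dominated by your coercive left-hand side on $B(0,R)$. Since there is no a priori global bound (the weighted energy grows like $R^{N-1}$ under the hypotheses, as the integrand is merely bounded), you cannot send $R\to\infty$ and conclude.

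The paper closes precisely this loop differently: it tests with $\psi=[(u-v)^{+}]^{\alpha}\varphi_R^{2}$ for the \emph{large} power $\alpha=2N+1$. A Young inequality then splits the cutoff term into (i) a term in $[(u-v)^{+}]^{\alpha+1}$ on the annulus, which the fiberwise Poincar\'e inequality converts into $\theta\,\mathcal{L}(2R)$ with $\theta=c_1C_{\t,\eps}(\alpha+1)^{2}$ made smaller than $2^{-N}$ by the choice of $\t_0,\eps_0$, and (ii) a pure cutoff term $\int|\nabla\varphi_R|^{\frac{\alpha+1}{2}}=O(R^{N-1-\frac{\alpha+1}{2}})=O(R^{-2})$, which does decay because $\alpha=2N+1$. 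One then invokes the iteration Lemma \ref{Le:L(R)} for $\mathcal{L}(R)=\int_{\mathcal{C}(R)}(|\nabla u|+|\nabla v|)^{p-2}|\nabla(u-v)^{+}|^{2}[(u-v)^{+}]^{\alpha-1}$, using the growth bound $\mathcal{L}(R)\le CR^{N}$ and $\theta<2^{-N}$, to get $\mathcal{L}\equiv 0$ and hence $u\le v$. Your argument is missing this doubling-plus-iteration structure (or an equivalent device); with the linear power and a direct passage $R\to\infty$ it does not close for $N\ge3$. The final remark on the globally Lipschitz case is also not a matter of truncating $(u-v)^{+}$: in the paper the same proof applies verbatim because all constants become independent of $M_0$ through the global Lipschitz/boundedness hypotheses, whereas your truncation $T_K(w)$ would additionally have to be threaded through the fiberwise Poincar\'e inequality and the absorption constants, which you have not justified.
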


For later purposes, we also state the following special case of the previous theorem. It corresponds to the case in which $B_{x'}^\varepsilon\equiv\emptyset$ and the set $\mathcal{S}_{(\t,\varepsilon)}$ is contained in a narrow strip.
This result also provides an extension of Theorem 1.1 in \cite{FMS} to the case of problems involving a
first-order term as in \eqref{E:P} or in \eqref{Eq:WCP}.

\begin{theorem}\label{th:COROwcpstrip}
Let $1<p<2$, $N \geq 2$ and let $(H_1),(H_4),(H_5)$ be satisfied. Fix $M_0>0$. Consider $\lambda>0$ and  set
$$
\Sigma_{(\lambda, y_0)}:= \mathbb{R}^{N-1}\times \big(y_0-\frac{\lambda}{2}, y_0+\frac{\lambda}{2}\big), \quad y_0 \geq \frac{\lambda}{2}.$$
Let $u, v \in C^{1, \alpha}_{loc}(\overline{\Sigma_{(\lambda, y_0)}})$
such that $ \Vert u \Vert_{\infty} + \Vert \nabla u \Vert_{\infty} \le M_0$, $ \Vert v \Vert_{\infty} + \Vert \nabla v \Vert_{\infty} \le M_0$ and
\begin{equation}\label{Eq:CoroWCP}
\begin{cases}
-{\hbox {\rm div}} (a(x,u)|\nabla u|^{p-2} \nabla u )+b(x,u)|\nabla u|^q\leq f(x,u), & \text{ in }\Sigma_{(\lambda, y_0)},\\
-{\hbox {\rm div}} (a(x,v)|\nabla v|^{p-2} \nabla v )+b(x,v)|\nabla v|^q\geq f(x,v), & \text{ in }\Sigma_{(\lambda, y_0)},\\
u\leq v, & \text{ on } \partial\mathcal{S}\,,
\end{cases}
 \end{equation}
where  $\mathcal{S} \subseteq \Sigma_{(\lambda, y_0)}$ is an open subset.

Then there exists $${\overline \lambda}={\overline \lambda}(N, p, q, M_0, \gamma)>0\footnote{ \, ${\overline \lambda} $ can be explicitely calculated, cfr. Remark \ref{valoriexpl}.}$$

such that, if  $0<\lambda<{\overline \lambda}$, it follows that
$$
u \leq v \qquad \text{ in } \, \mathcal{S}.
$$
If the functions $f$, $a$ and $b$ are assumed to be \emph{globally} Lipschitz continuous on $ {\mathbb{R}^N_+} \times {\mathbb{R}}$ and the functions $a$ and $b$ are supposed to be \emph{bounded} on $ {\mathbb{R}^N_+} \times {\mathbb{R}}$, the same conclusion holds true without any assumption on the boundedness of $u$ and $v$.
\end{theorem}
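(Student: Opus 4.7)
The strategy is the narrow-strip weak comparison argument adapted to the quasilinear operator with first-order term: test the subtracted inequality against a localized version of $w:=(u-v)^+$, use the Damascelli algebraic inequality together with the $L^\infty$ bound on the gradients to recover uniform quadratic coercivity from the singular $p$-Laplacian ($1<p<2$), and absorb all lower-order terms via the 1D Poincar\'e inequality in the thin $y$-direction when $\lambda$ is small. Concretely, fix a radial cutoff $\varphi_R\in C^\infty_c(\mathbb{R}^{N-1})$ with $0\le\varphi_R\le 1$, $\varphi_R\equiv 1$ on $B'_R\subset\mathbb{R}^{N-1}$, $\mathrm{supp}\,\varphi_R\subset B'_{2R}$ and $|\nabla\varphi_R|\le 2/R$, and let $w:=(u-v)^+$, extended by zero outside $\mathcal{S}$. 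Since $u\le v$ on $\partial\mathcal{S}$ and $u,v\in C^{1,\alpha}_{loc}$, this extension is Lipschitz on $\overline{\Sigma_{(\lambda,y_0)}}$, so $\psi:=w\,\varphi_R^2$ is an admissible compactly supported test function in both inequalities of \eqref{Eq:CoroWCP}; we subtract them and expand $\nabla\psi=\varphi_R^2\nabla w+2\varphi_R w\,\nabla\varphi_R$.

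The principal part splits as $a(x,u)\bigl(|\nabla u|^{p-2}\nabla u-|\nabla v|^{p-2}\nabla v\bigr)+\bigl(a(x,u)-a(x,v)\bigr)|\nabla v|^{p-2}\nabla v$. The Damascelli algebraic inequality, in the strong form $\bigl\langle|\nabla u|^{p-2}\nabla u-|\nabla v|^{p-2}\nabla v,\nabla(u-v)\bigr\rangle\ge c_p(|\nabla u|+|\nabla v|)^{p-2}|\nabla w|^2\ge c_p(2M_0)^{p-2}|\nabla w|^2$, combined with $(H_5)$, produces the coercive contribution $C_1\int|\nabla w|^2\varphi_R^2$ with $C_1=\gamma c_p(2M_0)^{p-2}$. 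The $a$-variation term is bounded by $L_a|u-v|\,M_0^{p-1}$ via $(H_4)$; the first-order term is further decomposed as $(b(x,u)-b(x,v))|\nabla u|^q+b(x,v)(|\nabla u|^q-|\nabla v|^q)$, with the second piece handled by the elementary Lipschitz continuity of $t\mapsto t^q$ on $[0,M_0]$ that follows from $q\ge 1$; the $f$-difference is bounded by $L_f|u-v|$. Applying Young's inequality repeatedly to absorb every $|\nabla w|$ factor into $\tfrac12 C_1\int|\nabla w|^2\varphi_R^2$ yields
\[
\tfrac12 C_1\int|\nabla w|^2\varphi_R^2\le C_2\int w^2\varphi_R^2+C_3\int w^2|\nabla\varphi_R|^2,
\]
with $C_2=C_2(N,p,q,M_0,\gamma)$ and $C_3=C_3(N,p,q,M_0,\gamma)$ explicit (from which the value announced in Remark \ref{valoriexpl} is extracted). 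Now for each fixed $x'$, $w(x',\cdot)$ vanishes on the boundary of the $y$-section $\mathcal{S}_{x'}\subset(y_0-\tfrac\lambda2,y_0+\tfrac\lambda2)$, so 1D Poincar\'e gives $\int_{\mathcal{S}_{x'}}w^2\,dy\le\lambda^2\int_{\mathcal{S}_{x'}}|\partial_y w|^2\,dy\le\lambda^2\int_{\mathcal{S}_{x'}}|\nabla w|^2\,dy$; integrating in $x'$ against $\varphi_R^2$ and plugging in above, for $\overline\lambda$ chosen so that $C_2\overline\lambda^2<\tfrac14 C_1$ and every $0<\lambda<\overline\lambda$,
\[
\int|\nabla w|^2\varphi_R^2\le C_4\int w^2|\nabla\varphi_R|^2.
\]

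The main obstacle is the passage to the limit $R\to\infty$, since the naive bound $\int w^2|\nabla\varphi_R|^2\le 4R^{-2}\|w\|_\infty^2\cdot|(B'_{2R}\setminus B'_R)\times(y_0-\tfrac\lambda2,y_0+\tfrac\lambda2)|\le C\lambda M_0^2 R^{N-3}$ fails to vanish for $N\ge 3$. This is overcome by an iteration: once we have the estimate $\int|\nabla w|^2\varphi_R^2\le CR^{a}$, Poincar\'e in $y$ gives $\int_{B'_R\times(y_0-\tfrac\lambda2,y_0+\tfrac\lambda2)}w^2\le C\lambda^2 R^a$; applying this with $R$ replaced by $2R$ yields $\int w^2|\nabla\varphi_R|^2\le 4R^{-2}\int_{B'_{2R}\times\mathrm{strip}}w^2\le C\lambda^2 R^{a-2}$, and hence the improved bound $\int|\nabla w|^2\varphi_R^2\le C' R^{a-2}$. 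Starting from $a=N-3$ and iterating a finite number of times ($\lceil(N-1)/2\rceil$ steps) makes the exponent negative, whence $\int|\nabla w|^2\varphi_R^2\to 0$ as $R\to\infty$. Therefore $|\nabla w|\equiv 0$ in $\mathcal{S}$, and combined with $w=0$ on $\partial\mathcal{S}$ this forces $w\equiv 0$, i.e., $u\le v$ in $\mathcal{S}$. For the unbounded-$u,v$ extension stated at the end of the theorem, the additional hypotheses (globally Lipschitz $f,a,b$ and bounded $a,b$) furnish all the quantitative bounds on coefficient differences without invoking $\|u\|_\infty+\|v\|_\infty$, so the argument carries over verbatim with the same $\overline\lambda$.
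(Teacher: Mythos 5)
Your overall scheme (test with a truncated difference, use \eqref{eq:inequalities} plus the gradient bound to get uniform coercivity, absorb zero-order terms by a one-dimensional Poincar\'e inequality in $y$ for $\lambda$ small, then send $R\to\infty$) is the right family of ideas, but there is a genuine gap at the central estimate
\[
\tfrac12 C_1\int|\nabla w|^2\varphi_R^2\;\le\; C_2\int w^2\varphi_R^2+C_3\int w^2|\nabla\varphi_R|^2 .
\]
With your linear test function $\psi=w\varphi_R^2$, the principal part also produces the cutoff term
\[
\int \bigl\langle a(x,u)\bigl(|\nabla u|^{p-2}\nabla u-|\nabla v|^{p-2}\nabla v\bigr),\nabla\varphi_R^2\bigr\rangle\,w
\;\le\; C\int |\nabla w|^{p-1}\,w\,\varphi_R\,|\nabla\varphi_R|,
\]
and because $p-1<1$ this term cannot be split by Young's inequality into the form you claim. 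If you align Young so as to reproduce $|\nabla w|^2\varphi_R^2$ (exponents $\tfrac{2}{p-1}$ and $\tfrac{2}{3-p}$), the leftover is $C\int w^{2/(3-p)}|\nabla\varphi_R|^{2/(3-p)}$, with both powers strictly below $2$; since $w$ is only bounded above, $w^{2/(3-p)}$ dominates $w^2$ and cannot be replaced by it. If instead you bound $|\nabla w|^{p-1}\le (2M_0)^{p-1}$ and then apply Young, you are left with $C\int|\nabla\varphi_R|^2\sim R^{N-3}$, which carries no factor of $w$ at all. In either case your $R$-iteration breaks down for $N\ge 3$: the pure cutoff term is immune to the Poincar\'e feedback, and the $w^{2/(3-p)}$ term, fed back through H\"older against the annulus volume $\sim R^{N-1}$, improves the exponent only when it already lies above $N-3-\tfrac{2}{2-p}+\,$(terms that vanish as $p\to1$); a short computation shows the scheme stalls exactly at the starting exponent $N-3$ (e.g.\ via Cauchy--Schwarz one gets new exponent $\tfrac{a+N-3}{2}$, whose fixed point is $a=N-3$). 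So the passage $R\to\infty$ is unjustified except when $N=2$.

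This is precisely the obstruction the paper's proof is built to circumvent: there one tests with $[(u-v)^+]^{\alpha}\varphi^2$ with the large power $\alpha=2N+1$, so that every error term carries a high power of $(u-v)^+$ except a single cutoff term $\int|\nabla\varphi|^{\frac{\alpha+1}{2}}\lesssim R^{-2}$, which does tend to zero; the weighted one-dimensional Poincar\'e inequality of Lemma \ref{poincare 1D lemma} (with weight $(|\nabla u|+|\nabla v|)^{p-2}$, giving the small constant $C_\lambda=2\lambda^{2}(2M_0)^{2-p}$) then makes the remaining coefficient smaller than $2^{-N}$, and Lemma \ref{Le:L(R)} closes the argument. (The paper in fact deduces Theorem \ref{th:COROwcpstrip} directly from the proof of Theorem \ref{th:wcpstrip} by taking $B^{\eps}_{x'}=\emptyset$ and $\tau=\lambda$.) To repair your proof you would need either to introduce such a power $[(u-v)^+]^{\alpha}$ (or another device, e.g.\ decaying weights in $x'$) so that the cutoff error at infinity is summable; as written, the claimed intermediate inequality and the subsequent gain of $R^{-2}$ per step do not follow from the stated applications of Young's inequality.
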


\begin{remark}
Theorem \ref{th:wcpstrip} and Theorem \ref{th:COROwcpstrip} are the main ingredients in the proof of our monotonicity result Theorem \ref{mainthm}. The new geometry of the domain that we consider is crucial  to prove our monotonicity result in the case $1<p< 2$, without the restriction $p>\frac{2N+2}{N+2}$ that appears in \cite{FMS}.  At each $x'$ fixed, the section of the domain is decomposed into two parts: a narrow (w.r.t. the Lebesgue measure) part and a part where the gradients are small.
\end{remark}

The qualitative properties of positive solutions to $-\Delta_p\,u=f(u)$, in various unbounded domains, are also studied in \cite{DFSV, DS3, efendiev, DuGuo, FMS, FMS3, FSV, FSV2, galakhov, MP, SZ, Zou}. \\

Here below we describe the scheme of the paper:

\begin{itemize}
\item [i)] In Section \ref{kjgkgòhgòs} we prove the general weak comparison principle stated in  Theorem \ref{th:wcpstrip} (and also Theorem \ref{th:COROwcpstrip}). The proof is carried out exploiting the iteration scheme introduced in \cite{FMS} (see also \cite{FMS3}), and taking advantage from the geometry of the considered domain.
    \item  [ii)] In Section \ref{sectioncompact} we prove a crucial property of local symmetry regions of the solutions. Namely we show that such regions must touch the boundary. This follows by a fine analysis of the limiting profiles of the solution.
        \item [iii)] In Section \ref{sectioncompactbistreremdbvfj} we prove Proposition \ref{lellalemma}, that allows to carry out  the moving plane procedure via Theorem \ref{th:wcpstrip}.
            \item [iv)] In Section \ref{sect.cort} we prove Theorem \ref {mainthm}.
            \item [v)] In Section \ref{sect.cortjlhjdligld} we prove Theorem \ref {mainthmnonpositive} and Theorem \ref {liouvillenextgenerationtris}.
\end{itemize}

\section{The weak comparison principle: Proof of Theorem \ref{th:wcpstrip}}\label{kjgkgòhgòs}

In the sequel we will use the following  inequalities: \noindent $\forall \eta, \eta' \in  \mathbb{R}^{N}$ with $|\eta|+|\eta'|>0$
there exist positive constants $C_1, C_2$ depending only on $p$ such that
\begin{eqnarray}\label{eq:inequalities}
[|\eta|^{p-2}\eta-|\eta'|^{p-2}\eta'][\eta- \eta'] &\geq& C_1 (|\eta|+|\eta'|)^{p-2}|\eta-\eta'|^2 \quad \text{if }    p >1  \\ \nonumber
\\ \nonumber
||\eta|^{p-2}\eta-|\eta'|^{p-2}\eta'| &\leq&C_2|\eta -\eta'|^{p-1} \quad \text{if } 1<p \le 2.\end{eqnarray}

\vskip6pt
\noindent
{\sc Notation.} Generic fixed numerical constants will be denoted by
$C$ (with subscript in some case) and they will be allowed to vary within a single line or formula.
$|S|$ denotes the Lebesgue measure of a set $S$.
$f^+$ and $f^-$ are the positive part and the negative part of a function $f$, i.e.
$f^+=\max\{f,0\}$ and $f^-=-\min\{f,0\}$.\\

We start  recalling a lemma, whose proof can be found in \cite[Lemma 2.1]{FMS}.
\begin{lemma}\label{Le:L(R)}
Let $\theta >0$ and $\beta>0$ such that $\theta < 2^{-\beta}$.
Moreover let $R_0>0$, $c>0$ and
$$\mathcal{L}:(R_0, + \infty) \rightarrow \mathbb{R}$$
a non-negative and non-decreasing function such that
\begin{equation}\label{eq:L}
\begin{cases}
\mathcal{L}(R)\leq \theta \mathcal{L}(2R)+g(R) & \forall R>R_0,\\
\mathcal{L}(R)\leq CR^{\beta} & \forall R >R_0,
\end{cases}
\end{equation}
where $g:(R_0, +\infty)\rightarrow \mathbb{R}^+$ is such that
$\displaystyle \lim_{R\rightarrow +\infty}g(R)=0 .$ Then
$$\mathcal{L}(R)=0.$$
\end{lemma}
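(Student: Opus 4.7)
The plan is a standard but delicate iteration scheme: iterate the functional inequality, kill the leading term using the growth bound together with the assumption $\theta < 2^{-\beta}$, and then use the monotonicity of $\mathcal{L}$ together with $g(R)\to 0$ at infinity to force the residual series to $0$.

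First I would prove by induction on $n\geq 1$ that
\begin{equation*}
\mathcal{L}(R) \,\leq\, \theta^n \mathcal{L}(2^n R) \,+\, \sum_{k=0}^{n-1} \theta^k g(2^k R), \qquad \forall\, R > R_0.
\end{equation*}
The base case $n=1$ is exactly the first line of \eqref{eq:L}; the induction step follows by applying that same inequality to $\mathcal{L}(2^n R)$ (legitimate since $2^n R > R_0$) and multiplying by $\theta^n$. The growth bound then gives
\begin{equation*}
\theta^n \mathcal{L}(2^n R) \,\leq\, C(2^n R)^\beta \theta^n \,=\, C R^\beta (\theta\cdot 2^\beta)^n,
\end{equation*}
and the hypothesis $\theta < 2^{-\beta}$ is exactly $\theta\cdot 2^\beta < 1$, so this quantity tends to $0$ as $n\to\infty$. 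Passing to the limit in $n$ (the series $\sum_k \theta^k g(2^k R)$ converges since $g$ is bounded on $[R,+\infty)$ for $R$ large, a free consequence of $g\to 0$), we obtain
\begin{equation*}
\mathcal{L}(R) \,\leq\, \sum_{k=0}^{\infty} \theta^k g(2^k R), \qquad \forall\, R > R_0.
\end{equation*}

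To upgrade this to $\mathcal{L}(R)=0$ (rather than merely $\leq$ a generically positive quantity) I would exploit the monotonicity of $\mathcal{L}$. For an arbitrary fixed $R>R_0$ and any $R'\geq R$, monotonicity gives $\mathcal{L}(R)\leq \mathcal{L}(R')\leq \sum_{k=0}^{\infty}\theta^k g(2^k R')$. Since $\theta<2^{-\beta}<1$, $\sum_k \theta^k$ is a convergent geometric series; for $R'$ large enough the sequence $(g(2^k R'))_k$ is uniformly bounded (again because $g\to 0$ at infinity), and for each fixed $k$ one has $g(2^k R')\to 0$ as $R'\to +\infty$. Dominated convergence in the weighted $\ell^1$-sum thus yields $\sum_{k=0}^{\infty}\theta^k g(2^k R')\to 0$ as $R'\to +\infty$. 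Combined with $\mathcal{L}\geq 0$, this forces $\mathcal{L}(R)=0$.

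The only technical point worth highlighting is the justification of the two limit exchanges (letting $n\to\infty$ inside the iterated inequality, then $R'\to +\infty$ inside the resulting infinite sum). Both are handled uniformly by the observation that $g\geq 0$ with $\lim_{R\to\infty}g(R)=0$ forces $g$ to be bounded by some constant $M$ on a half-line $[R_2,+\infty)$, which furnishes the summable dominating sequence $M\theta^k$ needed to apply dominated convergence. No other step is subtle, and this is why the statement is used as a black-box tool in the subsequent iteration arguments of the paper.
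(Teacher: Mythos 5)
Your proof is correct: the iteration $\mathcal{L}(R)\le\theta^n\mathcal{L}(2^nR)+\sum_{k=0}^{n-1}\theta^k g(2^kR)$, the use of $\theta\,2^\beta<1$ to kill the leading term, and the final step $\mathcal{L}(R)\le\mathcal{L}(R')\le\sum_{k\ge0}\theta^k g(2^kR')\to0$ via monotonicity and $g(R)\to0$ are all sound. The paper itself gives no proof here (it cites Lemma~2.1 of \cite{FMS}), and your argument is exactly the standard one behind that reference, so there is nothing further to compare.
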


We provide now the proof of a generalized version of the Poincar\'e  inequality in one dimension.
\begin{lemma}[Poincar\'e type inequality]\label{poincare 1D lemma}
Let $I$ be an open bounded subset of $\R$ and assume that $I=A\cup B$ with $|A\cap B|=0$, $A$ and $B$ measurable subsets of $I$.
Let $\rho:I\to\R\cup\{\infty\}$ be measurable and such that
\[
\underset{t\in I}{\inf}\,\rho(t)>0\,.
\]
Then for any $w\in H^1_0(I)$ such that $\int_I\rho(t)\vert \partial_tw\vert^2(t)dt$ is finite,
the following inequality holds:
\begin{equation}\label{poincare 1D ineq}
\int_I w^2(t)dt\leq
2|I|\max
\left\{
|A|\sup_{t\in A}\frac{1}{\rho(t)},\,|B|\sup_{t\in B}\frac{1}{\rho(t)}
\right\}
\int_I\rho(t)\vert \partial_tw\vert^2(t)dt.
\end{equation}
\end{lemma}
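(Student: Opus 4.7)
The plan is to establish the inequality by exploiting the one-dimensional structure: first derive a pointwise bound on $w(t)^2$ via a weighted Cauchy--Schwarz inequality, then integrate over $I$.

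First I would write $I$ as the (at most countable) disjoint union of its open connected components $(a_i,b_i)$. Since $I$ is bounded and $w\in H^1_0(I)$, the zero-extension of $w$ is continuous on $\R$ and vanishes at every endpoint $a_i$, $b_i$. Fix an index $i$ and a point $t\in(a_i,b_i)$. Using $w(a_i)=0$, I would write
\[
w(t)=\int_{a_i}^t \partial_s w(s)\,ds = \int_{a_i}^t \frac{1}{\sqrt{\rho(s)}}\,\sqrt{\rho(s)}\,\partial_s w(s)\,ds,
\]
and apply the Cauchy--Schwarz inequality (which is legitimate thanks to the assumption $\inf_I \rho >0$) to obtain
\[
w(t)^2 \le \left(\int_{a_i}^t \frac{ds}{\rho(s)}\right)\left(\int_{a_i}^t \rho(s)\,|\partial_s w(s)|^2\,ds\right).
\]

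The second step is to control the first factor by exploiting the decomposition $I=A\cup B$. Setting $M:=\max\{|A|\sup_A 1/\rho,\,|B|\sup_B 1/\rho\}$ and splitting the integral accordingly, one obtains
\[
\int_{a_i}^t \frac{ds}{\rho(s)}=\int_{(a_i,t)\cap A}\frac{ds}{\rho(s)}+\int_{(a_i,t)\cap B}\frac{ds}{\rho(s)}\le |A|\sup_A \frac{1}{\rho}+|B|\sup_B \frac{1}{\rho}\le 2M.
\]
Enlarging the second factor in the Cauchy--Schwarz bound by integrating over all of $I$ (allowed by non-negativity of the integrand) yields the uniform pointwise estimate
\[
w(t)^2 \le 2M \int_I \rho(s)\,|\partial_s w(s)|^2\,ds \qquad \text{for every } t\in I.
\]

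Finally I would integrate this bound over each component $(a_i,b_i)$, obtaining $\int_{a_i}^{b_i} w^2\,dt \le 2M\,(b_i-a_i)\int_I \rho\,|\partial_t w|^2\,dt$, and sum over $i$ using $\sum_i (b_i-a_i)=|I|$ to conclude. No step is really delicate; the only place where a little care is needed is the zero-extension argument at the endpoints, which is warranted by the boundedness of $I$ and the continuous embedding $H^1\hookrightarrow C^0$ in one dimension, guaranteeing that $w$ vanishes continuously at every $a_i$ and $b_i$.
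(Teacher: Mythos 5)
Your proof is correct and follows essentially the same route as the paper: a pointwise bound on $w$ via the fundamental theorem of calculus and Cauchy--Schwarz, with the $A/B$ decomposition supplying the factor $\max\bigl\{|A|\sup_A 1/\rho,\,|B|\sup_B 1/\rho\bigr\}$, and then integration over $I$ (i.e.\ $\int_I w^2 \le |I|\sup w^2$) giving the stated constant $2|I|M$. The only cosmetic differences are that you apply the weighted Cauchy--Schwarz in one shot and bound $\int 1/\rho \le 2M$, whereas the paper splits $\int_I|\partial_t w|$ over $A$ and $B$ and uses $(x+y)^2\le 2(x^2+y^2)$, and your treatment of the base point via the connected components is a slightly more careful version of the paper's ``there exists $a\in\overline I$'' step.
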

\proof

Since $w$ belongs to $H^1_0(I)$, there exists $a\in \overline I$ such that
$w(x)=\displaystyle\int_a^x \partial_t w(t)dt$. Thus we have:
\begin{eqnarray}\label{eq:cazcczczczczcczzcc}\\
\nonumber |w(x)| &\leq&\int_a^x |\partial_t w(t)|dt\leq \int_I|\partial_tw(t)|dt=\int_A|\partial_tw(t)|dt+\int_B|\partial_tw(t)|dt\\
\nonumber &\leq& {|A|^{1/2}}{\left(\int_A \vert \partial_tw\vert^2(t)dt\right)^{1/2}}+{|B|^{1/2}}{\left(\int_B \vert \partial_tw\vert^2(t)dt\right)^{1/2}}\\
\nonumber &\leq& \left({|A|\sup_{t\in A}\frac{1}{\rho(t)}}\right)^{1/2}\left({\int_A\rho(t) \vert \partial_tw\vert^2(t)dt}\right)^{1/2}\\\nonumber&+&
 \left({|B|\sup_{t\in B}\frac{1}{\rho(t)}}\right)^{1/2}\left({\int_B\rho(t) \vert \partial_tw\vert^2(t)dt}\right)^{1/2}.
\end{eqnarray}

Finally, by using \eqref{eq:cazcczczczczcczzcc}  we obtain:
\begin{eqnarray}\nonumber
&& \int_I w^2(t)dt\leq |I|\sup_{t\in I} w^2(t)\\
\nonumber &\leq& 2|I|\left({|A|\sup_{t\in A}\frac{1}{\rho(t)}}{\int_A\rho\, \vert \partial_tw\vert^2(t)dt}+
|B| {\sup_{t\in B}\frac{1}{\rho(t)}}{\int_B\rho\, \vert \partial_tw\vert^2(t)dt}\right),
\end{eqnarray}
from which the thesis immediately follows.
\endproof

\

\noindent \textbf{Proof of Theorem \ref{th:wcpstrip}:} In the proof, the quantities $L_a(M_0), L_b(M_0), L_f(M_0)$ and $ K(M_0)$ will denote the structural constants appearing in assumption $(H_4)$. Also we denote by
$||\cdot||_\infty$, the $L^\infty$ norm in $\Sigma_{(\lambda, y_0)}$. \\

We remark that  $(u-v)^+$ belongs to $L^{\infty}(\Sigma_{(\lambda, y_0)})$ since $u$ and $v$
are bounded in $\Sigma_{(\lambda, y_0)}$.\\
For $\alpha>1$ we define
\begin{equation}\label{Eq:Cut-off}
\psi=[(u-v)^+]^{\alpha} \varphi^2,
\end{equation}
where $\varphi(x',y)=\varphi(x')$ and  $\varphi(x')\in C^{\infty}_c (\mathbb{R}^{N-1}) $ is such that
\begin{equation}\label{Eq:Cut-off1}
\begin{cases}
\varphi \geq 0, & \text{ in } \mathbb{R}^N_+\\
\varphi \equiv 1, & \text{ in } B^{'}(0,R) \subset \mathbb{R}^{N-1},\\
\varphi \equiv 0, & \text{ in } \mathbb{R}^{N-1} \setminus B^{'}(0,2R),\\
|\nabla \varphi | \leq \frac CR, & \text{ in } B^{'}(0, 2R) \setminus B^{'}(0,R) \subset  \mathbb{R}^{N-1},
\end{cases}
\end{equation}
where $B^{'}(0,R)=\left\{ x'\in \mathbb{R}^{N-1}: |x'|<R \right\}$, $ R>1$ and $C$ is a positive constant.

Let $\mathcal{C}(R)$ be  defined as
$$\mathcal{C}(R):=
\left\{ \mathcal{S}_{(\tau,\varepsilon)}\cap \{B^{'}(0,R)\times \mathbb{R}\} \right\}.
$$

The assumptions in \eqref{Eq:Cut-off1} and the inequality
$u\leq v \text{ on } \partial \mathcal{S}_{(\tau,\varepsilon)}$
imply that $\psi \in W_0^{1,p}(\mathcal{C}(2R))$.
This allows us to use $\psi$  as test function in both equations of problem
\eqref{Eq:WCP} and to get (by subtracting):
\begin{eqnarray}\nonumber
&&\int_{\mathcal{C}(2R)}(a(x,u)|\n u|^{p-2}\n u-a(x,v)|\n v|^{p-2}\n v,\n\psi)\\&&+
\int_{\mathcal{C}(2R)}(b(x,u)|\n u|^q-b(x,v)|\n v|^q)\psi \\\nonumber
&&\leq\int_{\mathcal{C}(2R)}(f(x,u)-f(x,v))\psi\,,
\end{eqnarray}
from which we infer:
\begin{eqnarray}\nonumber
 && \int_{\mathcal{C}(2R)} a(x,u)(|\n u|^{p-2}\n u-|\n v|^{p-2}\n v,\n \psi)
 \\&&+\int_{\mathcal{C}(2R)} (a(x,u)-a(x,v))(|\n v|^{p-2}\n v,\n \psi) \\\nonumber &&+\int_{\mathcal{C}(2R)} b(x,u)(|\n u|^q-|\n v|^q)\psi+
\int_{\mathcal{C}(2R)}(b(x,u)-b(x,v))|\n v|^q\psi\\
\nonumber &&\leq \int_{\mathcal{C}(2R)}(f(x,u)-f(x,v))\psi
\end{eqnarray}
and hence:
\begin{eqnarray}\label{eq:strunzu}
&& \int_{\mathcal{C}(2R)} a(x,u)(|\n u|^{p-2}\n u-|\n v|^{p-2}\n v,\n \psi)\\
\nonumber &\leq&\int_{\mathcal{C}(2R)} |a(x,u)-a(x,v)||\n v|^{p-1}|\n \psi|+K(M_0) \int_{\mathcal{C}(2R)}||\n u|^q-|\n v|^q|\psi\\\nonumber&+&\int_{\mathcal{C}(2R)}|b(x,u)-b(x,v)||\n v|^q\psi+\int_{\mathcal{C}(2R)}|f(x,u)-f(x,v)|\psi\leq\\
\nonumber &\leq& L_a(M_0)\int_{\mathcal{C}(2R)}|\n v|^{p-1}|\n \psi||u-v|+ K(M_0)\int_{\mathcal{C}(2R)}||\n u|^q-|\n v|^q|\psi\\\nonumber &+& L_b(M_0)\int_{\mathcal{C}(2R)}|\n v|^q\psi|u-v|+ L_f(M_0)\int_{\mathcal{C}(2R)}\psi|u-v|.
\end{eqnarray}

Since $\nabla u$ and $\n v$ belongs to $L^\infty(\Sigma_{(\lambda, y_0)})$,
using \eqref{Eq:Cut-off} we obtain:
\begin{eqnarray}\label{eq:bcdbcwkgcweryott}
&&\alpha\int_{\mathcal{C}(2R)}a(x,u)(|\n u|^{p-2}\n u-|\n v|^{p-2}\n v,\n(u-v)^+)[(u-v)^+]^{\alpha-1}\varphi^2\\
\nonumber &+&\int_{\mathcal{C}(2R)} a(x,u)(|\n u|^{p-2}\n u-|\n v|^{p-2}\n v,\nabla \varphi^2)[(u-v)^+]^\alpha\\
\nonumber &\leq&
\alpha L_a(M_0) \|\nabla v\|_{\infty}^{p-1}\int_{\mathcal{C}(2R)} [(u-v)^+]^\alpha|\n (u-v)^+|\varphi^2\\\nonumber
&+& L_a(M_0)\|\nabla v\|_{\infty}^{p-1}\int_{\mathcal{C}(2R)}[(u-v)^+]^{\alpha+1}|\n \varphi^2|+\\
\nonumber &+& (L_b(M_0)\|\nabla v\|_{\infty}^{q}+L_f(M_0))\int_{\mathcal{C}(2R)} [(u-v)^+]^{\alpha+1}\varphi^2+\\
\nonumber &+& q K(M_0)  {M_0}^{q-1}\int_{\mathcal{C}(2R)}|\n(u-v)^+||[(u-v)^+]^\alpha\varphi^2,
\end{eqnarray}
where, in the last term  we used the mean value theorem and the boundedness of $\nabla u$ and $ \nabla v$ to deduce that:
$\Big||\nabla u|^q-|\nabla v|^q\Big|\leq q {M_0}^{q-1}|\nabla(u-v)|.$\\

\noindent Recalling \eqref{eq:inequalities}, from \eqref{eq:bcdbcwkgcweryott}  we obtain
\begin{eqnarray}\nonumber
&& \alpha \, C_1\,\gamma \int_{\mathcal{C}(2R)} (|\nabla u|+|\nabla v|)^{p-2} |\nabla (u-v)^+ |^2 [(u-v)^+]^{\alpha-1}\varphi^2\\\nonumber
&\leq& C_2 K(M_0)\int_{\mathcal{C}(2R)} |\n (u- v)^+|^{p-1}|\nabla\varphi^2|[(u-v)^+]^\alpha\\
\nonumber &+&\alpha\,L_a(M_0)\,\|\nabla v\|_{\infty}^{p-1}\int_{\mathcal{C}(2R)}|\n (u-v)^+| [(u-v)^+]^{\frac{\alpha-1+\alpha+1}{2}}\varphi^2\\
\nonumber&+& L_a(M_0)\,\|\nabla v\|_{\infty}^{p-1}\int_{\mathcal{C}(2R)}[(u-v)^+]^{\alpha+1}|\n \varphi^2|\\
\nonumber &+&(L_b(M_0)\|\nabla v\|_{\infty}^{q}+L_f(M_0))\int_{\mathcal{C}(2R)} [(u-v)^+]^{\alpha+1}\varphi^2\\
\nonumber &+&
q K(M_0)  {M_0}^{q-1}\int_{\mathcal{C}(2R)}|\n (u-v)^+|[(u-v)^+]^{\frac{\alpha-1+\alpha+1}{2}}\varphi^2.
\end{eqnarray}

It is easy to resume as follows:
\begin{eqnarray}\label{ineq con fi 1}
&&\alpha\,  C_1\,\gamma \int_{\mathcal{C}(2R)} (|\nabla u|+|\nabla v|)^{p-2} |\nabla (u-v)^+ |^2 [(u-v)^+]^{\alpha-1}\varphi^2\\
\nonumber&\leq&
( C_2 K(M_0) (2 M_0)^{p-1} + 2 M_0^{p} L_a(M_0))\int_{\mathcal{C}(2R)} |\nabla\varphi^2|[(u-v)^+]^\alpha\\\nonumber
&+&\alpha\cdot ( L_a(M_0) M_0^{p-1} + q K(M_0)  {M_0}^{q-1})\int_{\mathcal{C}(2R)}|\n (u-v)^+|[(u-v)^+]^{\frac{\alpha-1+\alpha+1}{2}}\varphi^2\\
\nonumber &+&(L_b(M_0) M_0^{q}+L_f(M_0))\int_{\mathcal{C}(2R)} [(u-v)^+]^{\alpha+1}\varphi^2.
\end{eqnarray}

Recalling that $p<2$ and using the weighted Young inequality $zy\leq \e z^2+\frac{y^2}{4\e} $, from
\begin{equation}\nonumber
\begin{split}
&\int_{\mathcal{C}(2R)}|\n (u-v)^+|[(u-v)^+]^{\frac{\alpha-1+\alpha+1}{2}}\varphi^2  \\
&=\int_{\mathcal{C}(2R) \cap \{\vert \nabla u \vert + \vert \nabla v\vert >0\}}|\n (u-v)^+|[(u-v)^+]^{\frac{\alpha-1}{2}}\varphi \cdot [(u-v)^+]^{\frac{\alpha+1}{2}}\varphi
\end{split}
\end{equation}
\noindent we deduce that, for every  $\delta>0$ it holds
\begin{eqnarray}\label{ineq con fi 2}
\\\nonumber
&&\alpha\, C_1\,\gamma\int_{\mathcal{C}(2R)} (|\nabla u|+|\nabla v|)^{p-2}|\nabla (u-v)^+ |^2 [(u-v)^+]^{\alpha-1}\varphi^2\\
\nonumber &\leq&( C_2 K(M_0) (2 M_0)^{p-1} + 2 M_0^{p} L_a(M_0))\int_{\mathcal{C}(2R)} |\nabla\varphi^2| [(u-v)^+]^\alpha
\\\nonumber &+&  \delta\cdot\alpha\cdot ( L_a(M_0) M_0^{p-1} + q K(M_0)  {M_0}^{q-1})\int_{\mathcal{C}(2R)}|\nabla (u-v)^+ |^2 [(u-v)^+]^{\alpha-1}\varphi^2
\\\nonumber&+&
 \Big(\alpha \frac{( L_a(M_0) M_0^{p-1} + q K(M_0)  {M_0}^{q-1})}{4\delta}  + (L_b(M_0)M_0^q +L_f(M_0))\Big)\int_{\mathcal{C}(2R)}[(u-v)^+]^{\alpha+1}\varphi^2.\\
 \nonumber &\leq&( C_2 K(M_0) (2 M_0)^{p-1} + 2 M_0^{p} L_a(M_0))\int_{\mathcal{C}(2R)} |\nabla\varphi^2| [(u-v)^+]^\alpha
\\\nonumber & + &\delta\alpha2^{2-p}( L_a(M_0) M_0 + q K(M_0)  {M_0}^{q-(p-1)})\int_{\mathcal{C}(2R)}(|\nabla u|+|\nabla v|)^{p-2}|\nabla (u-v)^+ |^2 [(u-v)^+]^{\alpha-1}\varphi^2
\\\nonumber&+&
 \Big(\alpha \frac{2^{2-p}\cdot ( L_a(M_0) M_0 + q K(M_0)  {M_0}^{q-(p-1)})}{4\delta}  + (L_b(M_0)M_0^q +L_f(M_0))\Big)\int_{\mathcal{C}(2R)}[(u-v)^+]^{\alpha+1}\varphi^2.
\end{eqnarray}
Here we are using the fact that $ q>1> p-1$, since $(H_1)$ holds.
\noindent

Take now

\[
\delta \,:=\, \frac{C_1\,\gamma}{2\cdot(2^{2-p} L_a(M_0) M_0 + 2^{2-p}q K(M_0)  {M_0}^{q-(p-1)})}\,.
\]

Consequently we have:

\begin{eqnarray}\label{eq:blabalkaslskjjlksall2}
\\\nonumber
&& \qquad \alpha \frac{C_1\,\gamma}{2} \int_{\mathcal{C}(2R)} (|\nabla u|+|\nabla v|)^{p-2}|\nabla (u-v)^+ |^2 [(u-v)^+]^{\alpha-1}\varphi^2\\
\nonumber &\leq & ( C_2 K(M_0) (2 M_0)^{p-1} + 2 M_0^{p} L_a(M_0))\int_{\mathcal{C}(2R)} |\nabla\varphi^2| [(u-v)^+]^\alpha\\\nonumber&+&
\alpha \frac{(2^{2-p} L_a(M_0) M_0 + 2^{2-p}q K(M_0)  {M_0}^{q-(p-1)})^2}{2 C_1 \gamma} \int_{\mathcal{C}(2R)}[(u-v)^+]^{\alpha+1}\varphi^2\\
\nonumber&+&
 (L_b(M_0) M_0^{q}+L_f(M_0))\int_{\mathcal{C}(2R)}[(u-v)^+]^{\alpha+1}\varphi^2.
\end{eqnarray}

Since $ \alpha >1$ we immediately get that

\begin{eqnarray}\label{eq:blabalkaslskjjlksall}\\\nonumber
&& \qquad \int_{\mathcal{C}(2R)} (|\nabla u|+|\nabla v|)^{p-2}|\nabla (u-v)^+ |^2 [(u-v)^+]^{\alpha-1}\varphi^2\\
\nonumber &\leq& \frac{2 (C_2 K(M_0) (2 M_0)^{p-1} + 2 M_0^{p} L_a(M_0))}{ C_1 \gamma} \int_{\mathcal{C}(2R)} |\nabla\varphi^2| [(u-v)^+]^\alpha\\\nonumber&+&
 \frac{(2^{2-p} L_a(M_0) M_0 + 2^{2-p}q K(M_0)  {M_0}^{q-(p-1)})^2}{(C_1 \gamma)^2}  \int_{\mathcal{C}(2R)}[(u-v)^+]^{\alpha+1}\varphi^2
\\\nonumber&+&
 \frac{2 (L_b(M_0) M_0^{q}+L_f(M_0))}{C_1\gamma}  \int_{\mathcal{C}(2R)}[(u-v)^+]^{\alpha+1}\varphi^2.
\end{eqnarray}

\noindent Let us define

\begin{equation}\label{costante1}
c_1=\frac{2 (C_2 K(M_0) (2 M_0)^{p-1} + 2 M_0^{p} L_a(M_0))}{ C_1 \gamma},
\end{equation}
\begin{equation}\label{costante2}
c_2 =  \frac{(2^{2-p} L_a(M_0) M_0 + 2^{2-p}q K(M_0)  {M_0}^{q-(p-1)})^2}{(C_1 \gamma)^2} + \frac{2 (L_b(M_0) M_0^{q}+L_f(M_0))}{C_1\gamma} ,
\end{equation}

$$I_1= \int_{\mathcal{C}(2R)} |\nabla\varphi^2| [(u-v)^+]^\alpha,\quad I_2=\int_{\mathcal{C}(2R)}[(u-v)^+]^{\alpha+1}\varphi^2$$
and note that both $c_1$ and $c_2$ depend only on $p,q,\gamma $ and $ M_0$, in particular they are independent of $\alpha>1$.
\noindent Thus, with the definitions above, we now rewrite  \eqref{eq:blabalkaslskjjlksall} as follows: for every $\alpha>1$,
\begin{eqnarray}\label{eq:jdladlsaaljkjlkjkljkldscquhdoufhwou}
&& \int_{\mathcal{C}(2R)}(|\nabla u|+|\nabla v|)^{p-2} |\nabla (u-v)^+ |^2 [(u-v)^+]^{\alpha-1}\varphi^2\\
\nonumber &\leq&c_1I_1+c_2I_2.
\end{eqnarray}

We also observe that

$$  \int_{\mathbb{R}^{N-1}} \Big( \int_{I_{x'}^{\t,\eps}}(|\nabla u|+|\nabla v|)^{p-2} |\nabla (u-v)^+ |^2 [(u-v)^+]^{\alpha-1}\Big ) dy \varphi^2(x') dx'= $$
$$ = \int_{\mathcal{C}(2R)}(|\nabla u|+|\nabla v|)^{p-2} |\nabla (u-v)^+ |^2 [(u-v)^+]^{\alpha-1}\varphi^2 < +\infty $$
since $\varphi$ depends only on $x'$ and the right-hand-side of \eqref{eq:jdladlsaaljkjlkjkljkldscquhdoufhwou} is finite. Hence, for almost every $x' \in {\mathbb{R}^{N-1}} $ we have that

\begin{eqnarray}\label{eq:intPoincar}
&& \int_{I_{x'}^{\t,\eps}}(|\nabla u|+|\nabla v|)^{p-2} |\nabla (u-v)^+ |^2 [(u-v)^+]^{\alpha-1}dy < +\infty,
\end{eqnarray}
which also entails: for almost every $x' \in {\mathbb{R}^{N-1}} $

\begin{eqnarray}\label{eq:IntPoincar}
&& \int_{I_{x'}^{\t,\eps}}(|\nabla u|+|\nabla v|)^{p-2} |\partial_y (u-v)^+ |^2 [(u-v)^+]^{\alpha-1}dy < +\infty.
\end{eqnarray}

\medskip

{\bf $\hookrightarrow$ Evaluation of the term $I_1$}.
 Recalling the decomposition stated in \eqref{split} which gives
\begin{equation}\nonumber
\mathcal{S}_{(\t,\varepsilon)}=
 \bigcup_{x'\in\mathbb R^{n-1}} I_{x'}^{\t,\eps}\qquad\text{with}\qquad I_{x'}^{\t,\eps}=A_{x'}^\t\cup B_{x'}^\eps
\end{equation}
we set
$$
\rho_{x'}(t)=(|\nabla u(x',t)|+|\nabla v(x',t)|)^{p-2}
$$
in order to apply Lemma \ref{poincare 1D lemma} in each $I_{x'}^{\t,\eps}$, for which \eqref{eq:IntPoincar} holds true,  with $\rho(t):=\rho_{x'}(t)$, $A:=A_{x'}^\t$, $B:=B_{x'}^\eps$ and $ w(t) = [(u-v)^+(x',t)]^{\frac{\alpha+1}{2}}$.  Note that the constant in \eqref{poincare 1D ineq} in this case is given by:
$$
C_{\t,\varepsilon}(x')=2\lambda\,\max \left\{
|A_{x'}^\t|\sup_{t\in A_{x'}^\t}\frac{1}{\rho_{x'}(t)},\,|B_{x'}^\eps|\sup_{t\in B_{x'}^\eps}\frac{1}{\rho_{x'}(t)}
\right\}\,.
$$

Therefore, for almost every  $x'\in\mathbb{R}^{N-1}$, we have
\begin{equation}\label{fkjngòsfnbncbvcnbfv}
C_{\t,\varepsilon}(x')\leq C_{\t,\varepsilon}\,:=\, 2\lambda_0\,\max \left\{
\tau (2 M_0)^{2-p}\,,\,\lambda_0 (2\varepsilon)^{2-p}
\right\}\,,
\end{equation}
so that, since $1<p<2$, $C_{\t,\varepsilon}$ can be chosen arbitrary small, for $\t$ and $\varepsilon$ sufficiently small.
Now, recalling that $\varphi$ depends only on $x'$ and
using Young inequality with conjugate  exponents $\frac{\alpha+1}{\alpha}$ and $\alpha+1$,
we get:

$$
I_1 \leq 2 \int_{\mathcal{C}(2R)}[(u-v)^+]^{\alpha}\varphi|\nabla \varphi|=
2 \int_{\mathcal{C}(2R)}[(u-v)^+]^{\alpha}\varphi|\nabla \varphi|^{\frac 12}|\nabla \varphi|^{\frac 12} $$
$$ \leq 2\int_{\mathcal{C}(2R)}
  \frac{[(u-v)^+]^{\alpha+1}\varphi^{\frac{\alpha +1}{\alpha}}|\nabla \varphi|^{\frac{\alpha+1}{2\alpha}}}{\frac{\alpha +1}{\alpha}}+
  2 \int_{\mathcal{C}(2R)}\frac{|\nabla \varphi |^{\frac{\alpha+1}{2}}}{\alpha+1}$$
$$ \leq
  2\int_{\mathbb{R}^{N-1}}\left(\int_{I_{x'}^{\t,\eps}}\left([(u-v)^+]^{\frac{\alpha+1}{2}}\right)^2dy \right)
  \varphi^{\frac{\alpha +1}{\alpha}}|\nabla \varphi|^{\frac{\alpha +1}{2\alpha}}dx'+
  2\int_{\mathcal{C}(2R)}|\nabla \varphi |^{\frac{\alpha+1}{2}}
$$
and the application of Lemma \ref{poincare 1D lemma} yields

\begin{eqnarray} \label{Eq:I1}\\\nonumber
&I_1 \leq&
C_{\t,\eps}\frac{(\alpha+1)^2}{2}
  \int_{\mathcal{C}(2R)}
  (|\nabla u|+|\nabla v|)^{p-2}[(u-v)^+]^{\alpha-1}|\partial_y (u-v)^+|^2\varphi^{\frac{\alpha +1}{\alpha}}|\nabla \varphi|^{\frac{\alpha +1}{2\alpha}}\\
\nonumber &+& 2\int_{\mathcal{C}(2R)}|\nabla \varphi |^{\frac{\alpha+1}{2}}\\
\nonumber &\leq&
  C_{\t,\eps}\frac{(\alpha+1)^2}{2}\int_{\mathcal{C}(2R)}(|\nabla u|+|\nabla v|)^{p-2}[(u-v)^+]^{\alpha-1}|\nabla(u-v)^+|^2\varphi^{\frac{\alpha +1}{\alpha}}|\nabla \varphi|^{\frac{\alpha +1}{2\alpha}}\\
\nonumber &+&2 \int_{\mathcal{C}(2R)}|\nabla \varphi |^{\frac{\alpha+1}{2}}\,,
\end{eqnarray}
where $C_{\t,\eps}$ has been defined in \eqref{fkjngòsfnbncbvcnbfv}.

\

\noindent{\bf $\hookrightarrow$ Evaluation of the term $I_2$.}
We use the same notations as in the evaluation of $I_1$ and we get:
\begin{eqnarray}\label{Eq:I2}\\ \nonumber
&&\quad\quad I_2=\int_{\mathcal{C}(2R)} \left( [(u-v)^+]^{\frac{\alpha+1}{2}}\right)^2\varphi^2=
 \int_{\mathbb{R}^{N-1}}\left ( \int_{I_{x'}^{\tau,\eps}}\left([(u-v)^+]^{\frac{\alpha+1}{2}}\right)^2 dy \right)(\varphi(x'))^2 dx'\\
\nonumber &\leq& C_{\t,\eps} \left (\frac{\alpha+1}{2} \right )^2\int_{\mathbb{R}^{N-1}}
  \left( \int_{I_{x'}^{\t,\eps}} (|\nabla u|+|\nabla v|)^{p-2}[(u-v)^+]^{\alpha-1}|\partial_y(u-v)^+|^2 dy \right)(\varphi(x'))^2dx'\\
\nonumber &\leq& C_{\t,\eps} \left(\frac{\alpha+1}{2} \right )^2\int_{\mathcal{C}(2R)} (|\nabla u|+|\nabla v|)^{p-2}[(u-v)^+]^{\alpha-1}|\nabla(u-v)^+|^2\varphi^2\,.
\end{eqnarray}

Let us fix
\begin{equation}\label{eq:alpaaaaaaaaaaaa}
\alpha = 2N+1 >1.
\end{equation}

Recalling that
$C_{\t,\eps}$ tends to $0$, as both $\t$ and $\eps$ go to zero,
we can take  $\t >0$ and $\eps>0$ small enough, such that
\begin{equation}\label{eq:lambda}
c_2 C_{\tau,\eps}\left(\frac{\alpha+1}{2}\right)^2 < \frac 12, \qquad c_1C_{\t,\eps}(\alpha+1)^2 < 2^{-N}
\end{equation}
so that from \eqref{eq:jdladlsaaljkjlkjkljkldscquhdoufhwou} we have
\begin{equation}\label{eq:diff2bis}
\int_{\mathcal{C}(2R)} (|\nabla u|+|\nabla v|)^{p-2}|\nabla(u-v)^+|^2[(u-v)^+]^{\alpha-1}\varphi^2
\leq  2 c_1  I_1.
\end{equation}

From \eqref{Eq:Cut-off1}  we infer  that
\begin{eqnarray}\label{eq:diff3}
&&\int_{\mathcal{C}(R)}(|\nabla u|+|\nabla v|)^{p-2} |\nabla(u-v)^+|^2(u-v)^{\alpha-1}\\
\nonumber &\leq& \int_{\mathcal{C}(2R)} (|\nabla u|+|\nabla v|)^{p-2}|\nabla(u-v)^+|^2(u-v)^{\alpha-1}\varphi^2
\leq  2 c_1I_1
\end{eqnarray}
and, using \eqref{Eq:I1}, we obtain
\begin{eqnarray}\label{eq:cons}
&&\int_{\mathcal{C}(R)}(|\nabla u|+|\nabla v|)^{p-2}|\nabla(u-v)^+|^2[(u-v)^+]^{\alpha-1}\leq\\
\nonumber &\leq&c_1C_{\t,\eps}(\alpha+1)^2
  \int_{\mathcal{C}(2R)}
  (|\nabla u|+|\nabla v|)^{p-2}
  [(u-v)^+]^{\alpha-1}|\nabla(u-v)^+|^2\varphi^{\frac{\alpha +1}{\alpha}}|\nabla \varphi|^{\frac{\alpha +1}{2\alpha}}+\\
\nonumber &+& 4c_1\int_{\mathcal{C}(2R)}|\nabla \varphi|^{\frac{\alpha+1}{2}}.
\end{eqnarray}

Recalling \eqref{eq:alpaaaaaaaaaaaa} one has:
\begin{eqnarray}\label{eq:diff4}
&&\int_{\mathcal{C}(R)}(|\nabla u|+|\nabla v|)^{p-2}|\nabla(u-v)^+|^2(u-v)^{\alpha-1}\\
\nonumber &\leq&
  \theta \int_{\mathcal{C}(2R)}
  (|\nabla u|+|\nabla v|)^{p-2}
  |\nabla(u-v)^+|^2[(u-v)^+]^{\alpha-1}+\hat C  R^{-2},
\end{eqnarray}
where
$$
\theta=c_1C_{\t,\eps}(\alpha+1)^2,
$$
$$
\hat C= 4c_1\lambda C^{\frac{\alpha+1}{2}} >0
$$
exploiting also \eqref{Eq:Cut-off1}. Notice that, in view of \eqref{eq:lambda}, we also have that $\theta<2^{-N}$. In order to apply  Lemma \ref{Le:L(R)} we set
$$
\mathcal{L}(R)=\int_{\mathcal{C}(R)}(|\nabla u|+|\nabla v|)^{p-2}|\nabla(u-v)^+|^2[(u-v)^+]^{\alpha-1},
$$
and
$$ g(R)=\hat C R^{-2}.$$

Then from \eqref{eq:diff4} we have:
$$
\begin{cases}
\mathcal{L}(R)\leq \theta \mathcal{L}(2R)+g(R) & \forall R>0,\\
\mathcal{L}(R)\leq CR^{N} & \forall R >0.
\end{cases}
$$

Applying Lemma \ref{Le:L(R)} with $\beta=N$,
we get $\mathcal{L}(R)=0$ and consequently the thesis.
\begin{flushright}
$\square$
\end{flushright}

\noindent \textbf{Proof of Theorem \ref{th:COROwcpstrip}.}\\
The desired result is obtained with the same proof of that of Theorem \ref{th:wcpstrip} with the following slight (but necessary) modifications. Replace $\mathcal{S}_{(\t,\varepsilon)}$ by $\mathcal{S}$, set $ \varepsilon =\tau = \lambda$,  $ B_{x'}^\eps = \emptyset $,  $ I_{x'}^{\t,\eps}=A_{x'}^{\t} = \mathcal{S} \cap \{ x' \}\times (y_0-\frac{\lambda}{2}, y_0+\frac{\lambda}{2}) $ and observe that \eqref{fkjngòsfnbncbvcnbfv} becomes

\begin{equation}\label{LAMBDAfkjngòsfnbncbvcnbfv}
C_{\lambda}(x')\leq C_{\lambda}\,:=\, 2\lambda^2 (2 M_0)^{2-p},
\end{equation}
and that \eqref{eq:lambda} becomes

\begin{equation}\label{eq:lambdalambda}
c_2 C_{\lambda}\left(\frac{\alpha+1}{2}\right)^2 < \frac 12, \qquad c_1C_{\lambda}(\alpha+1)^2 < 2^{-N}.
\end{equation}
The conclusion the  follows by taking $\lambda$ small enough in the latter one.

\begin{flushright}
$\square$
\end{flushright}

\begin{remark} \label{valoriexpl}In view of \eqref{costante1},\eqref{costante2},\eqref{fkjngòsfnbncbvcnbfv}, \eqref{eq:alpaaaaaaaaaaaa} and of \eqref{eq:lambda}, it is possible to calculate explicitely the value of $\eps_0$ and $\tau_0$. The same can be done for $\overline \lambda$.
\end{remark}

\section{Properties of  the local symmetry regions}\label{sectioncompact}

Let us start this section recalling a useful change of variable. Under our assumptions on the function $ a=a(s)$ set
\[
A(s)\,:=\,\int_0^s\,a(t)^{\frac{1}{p-1}}\,dt\,.
\]

It follows that $A$ belongs to  $C^{1,1}_{loc} ([0,+\infty), [0,+\infty))$, is strictly increasing and satisfies $ A(0) = 0 $ and $ \lim_{t\to+\infty} A(t) = +\infty$. Set
\begin{equation}\label{kghksdhkhkghksresrrsr}
w\,:=\,A(u)\,
\end{equation}
then, $ w \in C^{1,\alpha}_{loc}(\overline{\mathbb{R}^N_+})$ and

\begin{equation}
\begin{cases}
-\Delta_p w\,+\,\tilde b(w)|\nabla w|^q\,=\,\tilde f(w), & \text{ in } \mathbb{R}^N_+\\
w(x',y) > 0, & \text{ in } \mathbb{R}^N_+\\
w(x',0)=0,&  \text{ on }\partial\mathbb{R}^N_+
\end{cases}
\end{equation}


\noindent with $\tilde b(w)\,:=\, \frac{b(A^{-1}(w))}{\big(a(A^{-1}(w))\big)^{\frac{q}{p-1}}}$ and $\tilde f(w)\,:=\, f(A^{-1}(w))$. It follows that  $\tilde b$ and $\tilde f$ still are locally Lipschitz continuous (here we used the fact that $ q> 1 > p-1$, since $(H_1)$ is in force). We notice that : $f(s)>0$ for $s>0$ if and only if $\tilde{f} (s)>0$ for $s>0$ and that, $ f(0) = \tilde{f}(0)$. This shows that, as we will recall later, it is not restrictive  to our purposes to assume from now on that
\begin{equation}\nonumber
a(s)=1\,.
\end{equation}

We also observe that using the  mean value theorem in the $y$-direction,
the Dirichlet condition on $u$ and the fact that $\n u \in L^{\infty}(\mathbb{R}^N_+)$,
we get that $u$ is bounded on every set of the form  $ \{0 \le y \le \eta\}$, for any $ \eta>0$.  This implies that also $w$ and $\nabla w$ are bounded on every set of the form  $ \{0 \le y \le \eta\}$, for any $ \eta>0$ (the bound might depend on $\eta$).

\medskip

With this in mind, now  we are  going to prove an important properties concerning the local symmetry regions of the solutions. Let us start with some notations.

We define the strip $\Sigma_\lambda$ by
\[
\Sigma_\lambda\,:=\, \{0<y<\lambda\}
\]
and the reflected function $u_\lambda(x)$ by
\[
u_\lambda(x)=u_\lambda(x',y)\,:=\, u(x',2\lambda-y)\,\quad\text{in}\,\,\, \Sigma_{2\lambda}\,.
\]



\noindent As customary we also define  the critical set $Z_u$  by
\[
Z_u=\{x\in \mathbb{R}^N_+ \,:\, \nabla u(x)=0\}.
\]

 We have the following
\begin{theorem}\label{trittofritto}
Let  $1<p<2$ and let $u \in C^{1,\alpha}_{loc}(\overline{\mathbb{R}^N_+})$ be a solution  to \eqref{E:P} with $a(s)=1$. Let $(H_1)$, $(H_2)$ be satisfied and assume that $f(s)>0$ for $s>0$.  Let us assume that both $u$ and $\nabla u$ are bounded on every strip $\Sigma_\eta$, $ \eta>0$. Suppose  furthermore  that $u$ is monotone non-decreasing in $\Sigma_\lambda$, for some $ \lambda>0$.
Assume that $\mathcal{U}$ is a (not empty) connected component of $\Sigma_\lambda\setminus Z_u$  such that $u(x)\equiv u_{\lambda}(x)$ in $\mathcal{U}$, (i.e. a local symmetry region for $u$).
Then   necessarily $\mathcal{U}$ touches the boundary of ${\mathbb{R}^N_+}$, namely
$$\partial \mathcal{U}\cap\{y=0\}\,\neq\,\emptyset.$$
\end{theorem}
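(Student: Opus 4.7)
\emph{Proof plan.} The plan is to argue by contradiction, via a horizontal-translation compactness argument combined with the strong maximum principle and the Hopf boundary lemma. As a preliminary reduction I would apply the change of variables $w = A(u)$ recalled at the start of this section to assume $a \equiv 1$ without loss of generality.

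First I would suppose by contradiction that $\partial \mathcal{U} \cap \{y = 0\} = \emptyset$, set $y_0 := \inf\{y : (x', y) \in \mathcal{U}\} > 0$, and pick a sequence $P_n = (x'_n, y_n) \in \mathcal{U}$ with $y_n \downarrow y_0$. Then I would consider the horizontal translates $u_n(x', y) := u(x' + x'_n, y)$. By the standing hypothesis that $u$ and $\nabla u$ are bounded on every strip $\Sigma_\eta$, the sequence $\{u_n\}$ is equi-bounded in $C^{1,\alpha}$ on each such strip, so Arzel\`a--Ascoli together with a diagonal extraction produces a subsequence converging in $C^1_{loc}(\overline{\mathbb{R}^N_+})$ to a limit $u_\infty$. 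This $u_\infty$ is a non-negative $C^{1,\alpha'}_{loc}$ weak solution of the same quasilinear equation with $u_\infty(x', 0) = 0$, still monotone non-decreasing in $y$ on $\Sigma_\lambda$. Furthermore, the shifted symmetry regions $\mathcal{U} - (x'_n, 0)$ converge in the Kuratowski sense to a non-empty open set $\mathcal{V} \subset \Sigma_\lambda$ with $(0, y_0) \in \overline{\mathcal{V}}$, and the equality $u_n \equiv (u_n)_\lambda$ on these translated regions passes to $u_\infty \equiv (u_\infty)_\lambda$ on $\mathcal{V}$.

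To extract the contradiction, the strong maximum principle of Pucci--Serrin type applied to $u_\infty$ (which is non-negative, with positive source $f(u_\infty)$ wherever $u_\infty > 0$) yields the dichotomy $u_\infty \equiv 0$ or $u_\infty > 0$ in $\mathbb{R}^N_+$. The trivial alternative is ruled out by choosing the $P_n$ so that $u(P_n)$ stays bounded below by a positive constant, which is possible since $\mathcal{U}$ is a non-empty open set where $u > 0$; hence $u_\infty > 0$. Then the Hopf boundary lemma gives $\partial_y u_\infty(x', 0) > 0$, and, applying the strong maximum principle to the linearized equation satisfied by $\partial_y u_\infty$ in the non-degenerate region $\Sigma_\lambda \setminus Z_{u_\infty}$, I would propagate this strict positivity along a connected non-degenerate ``tube'' from $\{y = 0\}$ up to any neighborhood of $(0, y_0)$. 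On the other hand, the reflection symmetry $u_\infty \equiv (u_\infty)_\lambda$ on $\mathcal{V}$, with $(0, y_0) \in \overline{\mathcal{V}}$ and $y_0 < \lambda$, forces $\partial_y u_\infty(0, y_0) = 0$ by continuity, producing the required contradiction.

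The hard part will be handling the degeneracy of $\Delta_p$ at critical points when $1 < p < 2$: the linearized operator acting on $\partial_y u_\infty$ is only uniformly elliptic off $Z_{u_\infty}$, so the strong maximum principle and the Hopf lemma must be invoked inside a non-degenerate subregion, and the symmetry and critical-set structure must pass correctly to the limit. Ensuring the existence of a non-degenerate tube connecting $\{y = 0\}$ to points arbitrarily close to $(0, y_0)$ relies on the $C^{1,\alpha'}$ convergence of the translates together with the Kuratowski stability of the non-degenerate connected components of $\Sigma_\lambda \setminus Z_{u_n}$ under this convergence.
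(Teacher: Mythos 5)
Your plan runs into two decisive obstacles exactly where the hard work lies. First, the concluding identity $\partial_y u_\infty(0,y_0)=0$ does not follow. Differentiating the reflection identity $u_\infty\equiv (u_\infty)_\lambda$ near $(0,y_0)$ gives $\partial_y u_\infty(0,y_0)=-\partial_y u_\infty(0,2\lambda-y_0)$, and since the monotonicity hypothesis only controls the sign of $\partial_y u$ in $\Sigma_\lambda$, i.e.\ below $\{y=\lambda\}$, while $2\lambda-y_0>\lambda$, no vanishing at $(0,y_0)$ can be inferred; in the paper the only derivative information of this kind is obtained by Rolle's theorem on the segment joining $(0,y_n)$ and $(0,2\lambda-y_n)$, and it is exploited only when $y_0=\lambda$. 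Second, the ``non-degenerate tube'' joining $\{y=0\}$ to a neighborhood of $(0,y_0)$ inside $\Sigma_\lambda\setminus Z_{u_\infty}$ is precisely what may fail: for $p\neq 2$ the linearized operator degenerates on $Z_{u_\infty}$, so the strong maximum principle for $\partial_y u_\infty$ cannot cross the critical set, and connected components of $\Sigma_\lambda\setminus Z_{u_n}$ are not stable under $C^{1,\alpha'}$ convergence (they can pinch, merge or disappear), so no Kuratowski argument produces such a tube. Assuming it exists is essentially assuming the conclusion; avoiding this Hopf-plus-propagation scheme (which in \cite{FMS} forced the restriction $p>\frac{2N+2}{N+2}$ through the comparison principles of \cite{DS2}) is the whole point of the present theorem.

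There are also gaps upstream. The hypothesis $\partial\mathcal{U}\cap\{y=0\}=\emptyset$ does not yield $y_0:=\inf\{y:(x',y)\in\mathcal{U}\}>0$, since minimizing sequences may escape horizontally; and you cannot in general choose $P_n$ with $y_n\downarrow y_0$ \emph{and} $u(P_n)\geq c>0$: ruling out the degenerate limit $u_\infty\equiv 0$, and more generally getting a uniform positive lower bound for $u$ on $\mathcal{U}$, is the content of the paper's Claims~1 and~2, which require the normalization $\bar u_n=u_n/u_n(0,\lambda/2)$, the Harnack inequality of \cite{PSB}, the monotonicity, and the classification of positive $p$-harmonic functions in the half-space from \cite{KSZ} giving $\bar u=ky$. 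Finally, the paper's actual contradiction has a completely different nature: once $\text{dist}(\mathcal{U},\{y=0\})>0$ and $f(u)\geq\gamma^+>0$ on $\mathcal{U}$ are established, one tests the equation with $e^{-s(u)}\varphi_R\,G_\varepsilon(|\nabla u|)|\nabla u|^{-1}\chi_{\mathcal{U}\cup\mathcal{U}^\lambda}$, using that $\nabla u=0$ on $\partial(\mathcal{U}\cup\mathcal{U}^\lambda)$ together with the weighted Hessian summability of \cite{MSS}, and then lets $\varepsilon\to 0$ and $R\to\infty$. None of this appears in your proposal, and without it (or a genuine substitute) the argument does not close.
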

\begin{proof}
\noindent Let us start showing the following
\begin{center}
{\em Claim 1:} There exists $\t=\t(\mathcal{U},\lambda)>0$ such that $\text{dist}(\mathcal{U},\{y=0\})>\t$.
\end{center}
By contradiction let us assume that there exists a sequence of points
\[
x_n=(x'_n,y_n)\in \mathcal U,
\]
such that
\begin{equation}\label{dgfygikjvb}
\underset{n\rightarrow \infty}{\lim} \text{dist}(x_n\,,\,\{y=0\})\,=\underset{n\rightarrow \infty}{\lim} y_n=0.
\end{equation}
We consider the  sequence
$$
\hat x_n=(x'_n,\frac{\lambda}{2})
$$  and  the two different cases
\begin{itemize}
\item[$a)$]  $u(\hat x_n)$ is strictly bounded away from zero uniformly on $n$;
\item [$b)$] up to subsequences $\underset{n\rightarrow \infty}{\lim} u(\hat x_n)=0$.
\end{itemize}

\

\textbf{Case $a)$.}  Define the sequence
\begin{equation}\label{hdjshdjshjjsdfghdeyrty}
u_n(x)= u(x'+x'_n,y).
\end{equation}
Let $\mathcal K \subset{\overline {\mathbb{R}_N^+}}$ be a compact set. Since both $u$ and $\nabla u$ are bounded on every strip $\Sigma_\eta$, $ \eta>0$, we get :
$$
0 \le u_n(x)\leq C({\mathcal K}), \qquad \vert \nabla u_n(x) \vert \leq C({\mathcal K}), \qquad \forall x \in \mathcal K,
$$
for some positive constant $C({\mathcal K})$.

Therefore  $C^{1,\alpha}$ estimates (see the classical results \cite{Di,Li,T}),
Ascoli's Theorem and a standard diagonal process imply that
\begin{equation}\label{hdjshdjshjjsdfghdeyrty11}
{u}_n\overset{C^{1,\alpha'}_{loc}(\overline{\mathbb{R}^N_+})}{\longrightarrow}u_\infty.
\end{equation}
up to subsequences,  for  $\alpha '<\alpha$.
Recalling that $u_n(0,\frac{\lambda}{2})\geq \gamma_0>0$,
uniform convergence implies that $u_\infty $ is a non-trivial non-negative solution to the equation in \eqref{E:P} (with $a(s)=1$).
Actually, by the strong maximum principle \cite{V}, we have that
\begin{equation}\label{dhjkshjjIXXEUNYBRCRTUW}
u_{\infty}(x)>0\quad \forall x \in \mathbb{R}^N_+.
\end{equation}
By the definition of $\mathcal U$  (i.e. $u(x)\equiv u_{\lambda}(x)$ in $\mathcal U$), since  by \eqref{dgfygikjvb} (together with the  Dirichlet condition and  the  mean value theorem) we have
$$
u(x_n',y_n)\leq \|\nabla u(x)\|_{\infty}\cdot y_n \rightarrow 0, \quad \text {as }n\rightarrow +\infty,
$$
then it follows:
\begin{equation}\label{eqkjsakdsksahgjghjgh}\lim_{n\rightarrow +\infty}u(x_n', 2\lambda-y_n)=\lim_{n\rightarrow +\infty} u(x_n',y_n)=0.\end{equation}
Then from  \eqref{eqkjsakdsksahgjghjgh},  using \eqref{hdjshdjshjjsdfghdeyrty} and \eqref{hdjshdjshjjsdfghdeyrty11},
we obtain:
$$
u_{\infty}(0,2\lambda)=0.
$$
This is a contradiction with \eqref{dhjkshjjIXXEUNYBRCRTUW} and \emph{Claim 1} is proved in case $a)$.
\

\textbf{Case $b)$.} Arguing exactly as in the proof of the case $a)$ here above, it follows that necessarily $u_\infty\equiv 0$. This implies that case $b)$  occurs only if $f(0)=0$ because if not, $0$ can not be a solution of our equation. \\
\noindent Recalling \eqref{hdjshdjshjjsdfghdeyrty} we define
\begin{equation}\label{skajskajkjsommomoininubu}
\bar{u}_n(x',y)\equiv\frac{u_n(x',y)}{u_n(0,\frac{\lambda}{2})}=\frac{u(x'+x'_n,y)}{u(x'_n,\frac{\lambda}{2})},
\end{equation}
so that
$$
\bar{u}_n(0,\frac{\lambda}{2})=1,
$$
and $u_n$ uniformly converges to $0$ on compact sets of $\overline{\mathbb{R}^N_+}$ by construction. Recalling that we are assuming that $a(s)=1$,
it is easily seen that
$$
-{\hbox {\rm div}} (|\nabla \bar{u}_n|^{p-2} \nabla \bar{u}_n )+\big(u_n(0,\frac{\lambda}{2})\big)^{q-(p-1)}\cdot b(u_n)\cdot|\nabla \bar{u}_n|^q=\frac{f(u_n)}{u_n^{p-1}}\cdot \bar{u}_n^{p-1},
$$
that is
\begin{equation}\label{equadifff}
-{\hbox {\rm div}} (|\nabla \bar{u}_n|^{p-2} \nabla \bar{u}_n )+\hat c_n(x)\cdot|\nabla \bar{u}_n|^{p-1}=\tilde c_n(x)\cdot \bar{u}_n^{p-1},
\end{equation}
where
$$
\hat c_n (x) = b(u_n) \vert \nabla u_n \vert^{q - (p-1)},  \qquad
\tilde c_n(x)=\frac{f(u_n)}{u_n^{p-1}}.
$$
The assumptions on $b(\cdot)$ and $f(\cdot)$ and the fact that $q>1> p-1$
imply that $\hat c_n$ and $\tilde c_n$ are bounded on every strip  $\Sigma_\eta$, $ \eta>0$. Indeed, since $b$ and $f$ are locally Lipschitz continuous functions we have

\begin{equation} \label{eq:222a}
\vert \hat c_n(x)\vert \le \Vert b(u) \Vert_{\infty, \Sigma_\eta} \Vert \nabla u \Vert^{q-(p-1)}_{\infty, \Sigma_\eta}
\end{equation}

\begin{equation} \label{eq:22222222222}
0 \le \tilde c_n(x)=\frac{f(u_n)}{u_n^{p-1}} \le L_f(\Vert u\Vert_{\infty, \Sigma_\eta} ) \vert u_n(x)  \vert^{2-p} \le L_f(\Vert u\Vert_{\infty, \Sigma_\eta} )\Vert u \Vert^{2-p}_{\infty, \Sigma_\eta},
\end{equation}
where we used  that $f(0) =0$.

On the other hand we have that $\underset{t\rightarrow 0^+}{\lim}\frac{f(t)}{t^{p-1}}=0$, since $f(0)=0$ as remarked above, $p-1< 1$ and  $f$ is locally Lipschitz continuous. The latter, together with the fact that both $u_n$ and $ \nabla u_n$ converge to zero uniformly on compact sets of $\overline{\mathbb{R}^N_+}$ and $q>1> p-1$, immediately yields that also $\tilde c_n$ and $ \hat c_n$ converge to zero uniformly on compact sets of $\overline{\mathbb{R}^N_+}$.

\medskip

Fix $ 0< \delta < \frac{\lambda}{2}$ and consider un arbitrary compact set ${\mathcal{K}}$ of $\overline{\mathbb{R}^N_+}$ containing the point $(0, \frac{\lambda}{2})$.  By Harnack inequality (see \cite[Theorem 7.2.2]{PSB}), applied to the equation \eqref{equadifff} on the compact set ${\mathcal{K}\cap \{y\geqslant \delta\}} \subset \subset  {\mathbb{R}^N_+}$, we get that:
\begin{equation}\label{H1bis}
\underset{\mathcal{K}\cap \{y\geqslant \delta\}}{\sup} {\bar u}_n \leqslant C_H\underset{\mathcal{K}\cap \{y\geqslant \delta\}}{\inf} {\bar u}_n
\leqslant C_H.
\end{equation}
Moreover, by the monotonicity of $u$ in $\Sigma_{\lambda}$ we have:
\begin{equation}
\underset{\mathcal{K}\cap\{y\geqslant 0\}}{\sup}\,\, {\bar u}_n \leqslant \underset{\mathcal{K}\cap \{y\geqslant \delta\}}{\sup} {\bar u}_n \leqslant C_H.
\end{equation}
Hence we can use (once again) $C^{1,\alpha}$ estimates,  Ascoli's Theorem and a diagonal argument to get, up to subsequences, that:
\[
\bar{u}_n\overset{C^{1,\alpha'}_{loc}({\overline {\mathbb{R}_+^N}})}{\longrightarrow}\bar{u}
\]
for  $\alpha '<\alpha$.
Arguing as above,  we see that $\bar{u}\geqslant 0 $ in ${\overline {\mathbb{R}^N_+}}$
and $\bar{u}(0,\frac{\lambda}{2})= 1$.\\
\noindent Taking into account  the properties of $\hat c_n$ and of $\tilde c_n$,
we can pass to the limit in \eqref{equadifff} obtaining:
$$
- \Delta_p \bar{u}=0\qquad \text{in}\quad \mathbb{R}^N_+.
$$
By the Strong Maximum Principle \cite{V}, we therefore get that $\bar{u}>0$ since
$\bar{u}$ cannot be equal to zero because of the condition :
$\bar{u}(0,\frac{\lambda}{2})=1$.
Actually, by construction, we have
\begin{equation}\label{E:Ptrici}
\begin{cases}
-\Delta_p \bar{u}=0, & \text{ in }\mathbb{R}^N_+\\
\bar{u} > 0, & \text{ in } \mathbb{R}^N_+\\
\bar{u}(x',0)=0,&  \text{ on }\partial\mathbb{R}^N_+.
\end{cases}
\end{equation}
By results in \cite{KSZ} it follows that $\bar{u}$ is affine linear, that means:
\[
\bar{u}(x',y)=ky,
\]
for some $k>0$ by the Dirichlet assumption.
This is a contradiction since by assumption $u$ and consequently $\bar{u}$ have a local symmetry region
and this concludes the proof of the {\em Claim 1}.

\

\noindent We show the following
\begin{center}
{\em Claim 2:} There exists $\gamma=\gamma(\mathcal{U},\lambda)>0$ such that $u\geq \gamma$ in $\mathcal U$.\end{center}
To show this, assume by contradiction that there exists a sequence of points
\[
x_n=(x'_n\,,\,y_n)\in\mathcal U,
\]
such that
\begin{equation}\label{dgfygikjvbbhfd}
\underset{n\rightarrow \infty}{\lim} u(x_n)=0
\end{equation}
and with  $y_n$ converging (up to subsequences) to $y_0>0$.

Using definition \eqref{hdjshdjshjjsdfghdeyrty}  we set
\[
\bar{u}_n(x',y)\equiv\frac{u(x'+x'_n,y)}{u(x'_n,y_n)},
\]
so that $\bar{u}(0,y_n)=1$ and $u_n$  uniformly converges to $0$ on compact sets of
$ {\overline {\mathbb{R}_+^N}}$ by construction (see \eqref{dgfygikjvbbhfd}).
As above (see \eqref{equadifff}), it is easy to see that
\begin{equation}\label{equadifffvddfsfsf}
-{\hbox {\rm div}} (|\nabla \bar{u}_n|^{p-2} \nabla \bar{u}_n )+\hat c_n(x)\cdot|\nabla \bar{u}_n|^{p-1}=\tilde c_n(x)\cdot \bar{u}_n^{p-1},
\end{equation}
with $\hat c_n$ and $\tilde c_n$ satisfy \eqref{eq:222a} and \eqref{eq:22222222222} and, both of them converges to zero uniformly on compact sets of $ {\overline {\mathbb{R}_+^N}}$.  Furthermore, we have that

\[
\bar{u}_n\overset{C^{1,\alpha'}_{loc}({\overline {\mathbb{R}_+^N}})}{\longrightarrow}\bar{u}
\]
up to subsequences,  for  $\alpha '<\alpha$.
Then, arguing as above,  we get that $\bar{u}\geqslant 0 $ in ${\overline {\mathbb{R}^N_+}}$
and $\bar {u}(0,y_0)= 1$, with  $y_0>0$. Moreover
$$- \Delta_p \bar{u}=0\qquad \text{in}\quad \mathbb{R}^N_+.$$
By the Strong Maximum Principle (see \cite{V}) we  get that
$\bar{u}>0$, since $\bar{u}$  cannot be equal to $0$ because of the condition:
$\bar{u}(0,y_0)=1$.
In fact, as in \eqref{E:Ptrici}, by construction and
using results in \cite{KSZ}, it follows  that $\bar{u}$ is of the form :
\begin{equation}\label{fdgsdgdkhgdhfgdhg}
\bar{u}(x',y)=ky,
\end{equation}
for some $k>0$.

Since by construction
\[
\bar u (0,y_0)=\bar u_\lambda (0,y_0),
\]
if $y_0<\lambda$, we get a contradiction by \eqref{fdgsdgdkhgdhfgdhg}
 concluding the proof of {\em Claim 2}. If else $y_0=\lambda$, it follows by construction that $\frac{\partial\,\bar u}{\partial y} (0,y_0)=0$. This is deduced by observing that $\frac{\partial\,\bar{u}_n}{\partial y}$ vanishes somewhere on the segment from $(0,y_n)$ to $(0,2\lambda-y_n)$ (since $(x'_n\,,\,y_n)\in\mathcal U$) and exploiting the uniform convergence of the gradients. Therefore again we get a contradiction by \eqref{fdgsdgdkhgdhfgdhg}
 concluding the proof of {\em Claim 2}.

\

\noindent Since $f(s)>0$ for $s>0$, {\em  Claim 2} implies that there exists $\gamma^+ >0$ such that
\begin{equation}\label{fffffff}
f(u)\geq\gamma^+ \quad \text{in } \mathcal U.
\end{equation}
Now we proceed in order to conclude the proof.
Let $\varphi_R(x',y)=\varphi_R(x')$ with $\varphi(x') \in C^{\infty}_c (\mathbb{R}^{N-1}) $  defined as in \eqref{Eq:Cut-off1}.
For all $\varepsilon>0$, let $G_\varepsilon:\mathbb{R}^+\cup\{0\}\rightarrow\mathbb{R}$ be defined as:
\begin{equation}\label{eq:G}
G_\varepsilon(t)=\begin{cases} t, & \text{if  $t\geq 2\varepsilon$} \\
2t-2\varepsilon,& \text{if $\varepsilon\leq t\leq2\varepsilon$}
\\ 0, & \text{if $0\leq t\leq \e$}.
\end{cases}
\end{equation}
\noindent Let $\chi$ be the characteristic function of a set .
We define
$$
\Psi=\Psi_{\varepsilon,R}\,:=\,e^{-s(u)}\displaystyle \varphi_R\frac{G_\varepsilon(|\nabla u|)}{|\nabla u|}\chi_{(\mathcal U\cup \mathcal{U}^\lambda)},
$$
where $\mathcal{U}^\lambda$ is the reflected set of $\mathcal{U}$ w.r.t. the hyperplane $T_\lambda=\{y=\lambda\}$ and
\begin{equation}\label{eq:aooooooooo}
s(t)=\hat C\cdot \int_0^t \, b^+(t')dt',
\end{equation}
with $\hat C$ some positive constant to be chosen later.
By the definition of  $\mathcal U$ and taking into account the fact that $\mathcal{U}$ is a local symmetry region of $u$,     we have that $\nabla u =0$ on $\partial(\mathcal U\cup \mathcal{U}^\lambda)$.
Moreover $\n u \in L^{\infty}(\mathbb{R}^N_+)$ implies that $u$ is bounded in $\Sigma_{\lambda}$.
Therefore  $\Psi$ is well defined and we can use it as test function in equation \eqref{E:P} (see also \cite[Lemma 5]{MMPS}),
getting:
\begin{eqnarray}\label{qqqqqqbissete}
&&\int_{\mathcal U\cup \mathcal{U}^\lambda}\, |\nabla u|^{p-2} (\nabla u ,\nabla\Psi) dx+
\int_{\mathcal U\cup \mathcal{U}^\lambda}b^+(u)|\nabla u|^q\Psi dx=\int_{\mathcal U\cup \mathcal{U}^\lambda}b^-(u)|\nabla u|^q\Psi dx\\\nonumber&+&\int_{\mathcal U\cup \mathcal{U}^\lambda}f(u)\Psi dx.
\end{eqnarray}
Since $u$ and $\Psi$ are even w.r.t. the hyperplane $\{y=\lambda\}$, it follows that $(\nabla u ,\nabla\Psi)$ is even too. Therefore we infer  that
\begin{equation}\label{qqqqqq}
\int_{\mathcal U}\, |\nabla u|^{p-2} (\nabla u ,\nabla\Psi) dx+
\int_{\mathcal U}b^+(u)|\nabla u|^q\Psi dx=\int_{\mathcal U}b^-(u)|\nabla u|^q\Psi dx+
\int_{\mathcal U}f(u)\Psi dx.
\end{equation}
Let us suppose $1\leq q\leq p$. For every $\sigma >0$  we have:
\begin{equation}\label{blelellelelle}
x^q\leq C(\sigma) \cdot x^p+\sigma,\quad x\geq 0\,,
\end{equation}
say e.g. $ C(\sigma)=\sigma^{1-\frac{p}{q}}$.
Therefore
\eqref{qqqqqq} and \eqref{blelellelelle} imply:
\begin{eqnarray}\label{qeqeqeeqeqeq}
&&\int_{\mathcal U}\, |\nabla u|^{p-2} (\nabla u ,\nabla\Psi) dx+C(\sigma)\int_{\mathcal U}b^+(u)|\nabla u|^p\Psi dx+\sigma\int_{\mathcal U}b^+(u)\Psi dx\\\nonumber
&&\geq\int_{\mathcal U}b^-(u)|\nabla u|^q\Psi dx+\int_{\mathcal U}f(u)\Psi dx\geq \int_{\mathcal U}f(u)\Psi dx.
\end{eqnarray}
By \eqref{fffffff} we can choose $\sigma$ in \eqref{blelellelelle}, say $\bar \sigma$,
small enough such that
$$
\gamma^+-\bar\sigma \,\|b^+(u)\|_{\infty}=\tilde C>0\,,
$$
so that
\begin{eqnarray}\label{qeqeqeeqeqeqAAA}
&&\int_{\mathcal U}\, |\nabla u|^{p-2} (\nabla u ,\nabla\Psi) dx+C(\bar \sigma)\int_{\mathcal U}b^+(u)|\nabla u|^p\Psi dx\geq\tilde C\int_{\mathcal U}\Psi dx.
\end{eqnarray}
Chosing $\hat C$ in \eqref{eq:aooooooooo} equal to $C(\bar \sigma)$ in \eqref{qeqeqeeqeqeqAAA} we obtain
\begin{eqnarray}\label{VaScOoOoOoO}\\\nonumber
&&\int_{\mathcal U}e^{-s(u)}\varphi_{R}|\nabla u|^{p-2}(\nabla u,  \nabla \frac{G_\varepsilon(|\nabla u|)}{|\nabla u|})dx\\\nonumber
&&+\int_{\mathcal U}e^{-s(u)}\frac{G_\varepsilon(|\nabla u|)}{|\nabla u|}
|\nabla u|^{p-2}(\nabla u, \nabla \varphi_{R})dx\\\nonumber
&&\geq\tilde C\int_{\mathcal U} e^{-s(u)}\displaystyle \varphi_R\frac{G_\varepsilon(|\nabla u|)}{|\nabla u|} dx
\end{eqnarray}
We set ${\displaystyle h_\varepsilon (t)=\frac{G_\varepsilon(t)}{t}}$, meaning that $h(t)=0$ for $0\leq t\leq \varepsilon$.
We have:
\begin{eqnarray}\label{eq:smm3}
&&\left| \int_{\mathcal U}\,e^{-s(u)}\varphi_{R}|\nabla u|^{p-2}(\nabla u,  \nabla \frac{G_\varepsilon(|\nabla u|)}{|\nabla u|})dx\right|\\\nonumber &&\leq C \int_{\mathcal U}|\nabla u|^{p-1}|h_\varepsilon'(|\nabla u|)||\nabla (|\nabla u|)|\varphi_R dx\\\nonumber
&&\leq C\int_{\mathcal U}|\nabla u|^{p-2}\Big(|\nabla u|h_\varepsilon'(|\nabla u|)\Big)\|D^2 u\| \varphi_R dx,
\end{eqnarray}
where $\|D^2 u\|$ denotes the Hessian  norm.\\

\noindent Here below, we fix $R>0$ and let $\varepsilon\rightarrow 0$. Later we will let $R\rightarrow \infty$.
To this aim, let us first show  that
\begin{itemize}
\item [$(i)$] $|\nabla u|^{p-2}||D^2 u|| \varphi_R \in L^1(\mathcal U) \,\,\, \forall R>0$;

\

\item [$(ii)$] $|\nabla u|h_\varepsilon'(|\nabla u|)\rightarrow 0$ a.e. in $\mathcal U$ as $\varepsilon \rightarrow 0$ and $|\nabla u|h_\varepsilon'(|\nabla u|)\leq C$ with $C$ not depending on $\varepsilon$.
\end{itemize}
Let us  prove $(i)$. Defining
$\mathcal{D}(R)=\left\{ \mathcal{U} \cap {\{B^{'}(0,R)\times \mathbb{R}\}} \right\},$ by H\"older's inequality it follows
\begin{eqnarray}\label{eq:smm4}
&&\int_{\mathcal U}|\nabla u|^{p-2}||D^2u||\varphi_R dx\leq C(\mathcal{D}(2R))\left( \int_{\mathcal{D}(2R)}|\nabla u|^{2(p-2)}||D^2u||^2\varphi_R^2 dx \right)^{\frac 12}\\\nonumber
&\leq&C\left( \int_{\mathcal{D}(2R)}|\nabla u|^{p-2-\beta}||D^2u||^2\varphi_R^2|\nabla u|^{p-2+\beta}dx \right )^{\frac 12}\\\nonumber &\leq& C ||\nabla u||^{(p-2+\beta)/2}_{L^{\infty}(\mathbb{R}^N_+)} \left(\int_{\mathcal{D}(2R)}|\nabla u|^{p-2-\beta}||D^2 u||^2 dx \right)^{\frac 12},
\end{eqnarray}
with  $0\leq\beta<1$ and $\varphi^2_R|\nabla u|^{p-2+\beta}$ consequently bounded .
By {\em Claim 1} we have:
$$
\text{dist}(\mathcal{U},\{y=0\})>0.
$$
Using  \cite[Proposition 2.1]{MSS}\footnote{Actually in Proposition 2.1 of \cite {MSS} it is considered the case $q=p$. The same result in the more general case $1<q\leq p$ follows exactly in the same way repeating the same calculations.}, we infer that
$$\left(\int_{\mathcal{D}(2R)}|\nabla u|^{p-2-\beta}||D^2 u||^2 dx \right)^{\frac 12}\leq C.$$ Then by \eqref{eq:smm4} we obtain
$$\int_{\mathcal U}|\nabla u|^{p-2}||D^2u||\varphi_R dx\leq C.$$

Let us  prove $(ii)$. Recalling \eqref{eq:G}, we obtain
$$
h'_\varepsilon(t)=
\begin{cases} 0 & \text{if  $t\geq 2\varepsilon$} \\
\frac{2\varepsilon}{t^2}& \text{if $\varepsilon\leq t\leq2\varepsilon$}
\\ 0 & \text{if $0\leq t\leq \e$},
\end{cases}
$$
and then $|\nabla u|h_\varepsilon'(|\nabla u|)$ tends to $0$
almost everywhere in $\mathcal U$ as $\varepsilon$ goes to $0$ and we have:
$|\nabla u|h_\varepsilon'(|\nabla u|)\leq 2$.
\

\noindent Then by \eqref{VaScOoOoOoO}, \eqref{eq:smm3} and $(i),$ $(ii)$ above,  passing to the limit
as $\varepsilon \rightarrow 0$, we get:
$$
\int_{\mathcal U}\,e^{-s(u)}
|\nabla u|^{p-2}(\nabla u, \nabla \varphi_{R})dx
\geq C\int_{\mathcal U}\varphi_Rdx, \quad \forall R>0.
$$
Recalling \eqref{Eq:Cut-off1},
we have that there exists $C=C(\|\nabla u\|_{L^{\infty}(\mathbb{R}^N_+)})$ (not depending on $R$)
such that:
$$
|\mathcal U \cap \text{supp}(\varphi_{R})|\cdot \frac 1R\geq C\cdot |\mathcal U \cap \text{supp}(\varphi_{R})|
$$
and we get a contradiction for $R$ large, concluding the proof.
\end{proof}
\section{Recovering Compacteness}\label{sectioncompactbistreremdbvfj}
In this section we prove a crucial result, which allows us to localize the support of $(u-u_{\bar \lambda})^+$, where  $\bar\lambda$ is defined in \eqref{barlambda} below. The localization obtained will enable us to apply the weak comparison principle Theorem \ref{th:wcpstrip}.

With the notations introduced at the beginning of the previous section, we set
\begin{equation}\label{MP}
\Lambda \,:=\,\Big\{\lambda\in\mathbb{R}^+\,\,:\,\, u\leq u_\mu \,\,\text{in}\,\, \Sigma_\mu \,\,\,\forall \mu<\lambda\Big\}\,
\end{equation}
and we define
\begin{equation}\label{barlambda}
\bar \lambda \,:=\, \sup\,\Lambda\,.
\end{equation}


We have the following:
\begin{proposition}\label{lellalemma} Let  $1<p<2$ and let $u \in C^{1,\alpha}_{loc}(\overline{\mathbb{R}^N_+})$ be a solution  to \eqref{E:P} with $a(s)=1$. Let $(H_1)$, $(H_2)$ and $(H_3)$ be satisfied and assume that $f(s)>0$ for $s>0$.  Let us assume that both $u$ and $\nabla u$ are bounded on every strip $\Sigma_\eta$, $ \eta>0$.

Assume $ 0 < \bar\lambda < + \infty$  and set $$W_\varepsilon:=\Big(u-u_{\bar \lambda+\varepsilon}\Big)\cdot \chi_{\{y\leqslant \bar{\lambda}+\vep\}}$$
where $ \varepsilon >0$.

Given $0<\delta <\frac{\blambda}{2}$ and $\rho>0$, there exists $\vep_0 >0 $ such that, for any $\vep\leqslant \vep_0$, it follows
\[
\text{Supp} \,W_\vep^+\subset \{0\leqslant y\leqslant \delta\}\cup\{\bar{\lambda}-\delta\leqslant y\leqslant\bar{\lambda}+\vep\}\cup \left(\bigcup_{x'\in\mathbb R^{N-1}} B_{x'}^\rho\right).
\]
where $B_{x'}^\rho$ is such that
\begin{equation}\label{njgkdngkfkkjc}
B_{x'}^\rho\subseteq\left\{y\in(0,\bar\lambda+\varepsilon)\,:\, |\nabla u(x',y)|<\rho,\,|\nabla u_{\bar \lambda +\varepsilon}(x',y)|<\rho\right\}.
\end{equation}
\end{proposition}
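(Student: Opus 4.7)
I would argue by contradiction. If the conclusion fails, I can extract sequences $\varepsilon_n\downarrow 0$ and points $x_n=(x_n',y_n)$ with $y_n\in[\delta,\bar\lambda-\delta]$ such that $u(x_n)>u_{\bar\lambda+\varepsilon_n}(x_n)$ and, after passing to a subsequence, either $|\nabla u(x_n)|\geq\rho$ for every $n$ or $|\nabla u_{\bar\lambda+\varepsilon_n}(x_n)|\geq\rho$ for every $n$. A further subsequence gives $y_n\to y_0\in[\delta,\bar\lambda-\delta]$. I then translate by setting $\tilde u_n(x',y):=u(x'+x_n',y)$ and look for a suitable limiting profile.

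Using the boundedness of $u$ and $\nabla u$ on each strip $\Sigma_\eta$, the $C^{1,\alpha}$ estimates of \cite{Di,Li,T} and a diagonal argument (in the spirit of the proof of Theorem~\ref{trittofritto}), I extract a subsequence $\tilde u_n\to u_\infty$ in $C^{1,\alpha'}_{loc}(\overline{\mathbb{R}^N_+})$ where $u_\infty$ is a non-negative solution of \eqref{E:P} with $a(s)=1$ and zero Dirichlet data. The selected gradient condition forces $|\nabla u_\infty(0,y_0)|\geq\rho$ or $|\nabla u_\infty(0,2\bar\lambda-y_0)|\geq\rho$ (the reflected point $2\bar\lambda-y_0\in[\bar\lambda+\delta,2\bar\lambda-\delta]$ lies in $\mathbb{R}^N_+$), so $u_\infty\not\equiv 0$ and the strong maximum principle \cite{V} yields $u_\infty>0$ in $\mathbb{R}^N_+$. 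Passing to the limit in $u\leq u_\mu$ (valid for every $\mu<\bar\lambda$ by definition of $\bar\lambda$) gives $u_\infty\leq(u_\infty)_{\bar\lambda}$ on $\Sigma_{\bar\lambda}$ and the monotonicity of $u_\infty$ in the $y$-direction on $\Sigma_{\bar\lambda}$; the strip bounds on $u_\infty$ and $\nabla u_\infty$ transfer automatically.

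I next pass to the limit in $u(x_n)>u_{\bar\lambda+\varepsilon_n}(x_n)$ to obtain $u_\infty(0,y_0)\geq u_\infty(0,2\bar\lambda-y_0)$, which together with the reverse inequality forces equality at the interior point $(0,y_0)\in\Sigma_{\bar\lambda}$. The function $v:=(u_\infty)_{\bar\lambda}-u_\infty$ is therefore non-negative on $\Sigma_{\bar\lambda}$ and vanishes at $(0,y_0)$, with at least one of $|\nabla u_\infty|,|\nabla(u_\infty)_{\bar\lambda}|$ bounded below by $\rho$ there. The strong comparison principle for the quasilinear operator (applicable in a neighborhood where the combined gradient $|\nabla u_\infty|+|\nabla(u_\infty)_{\bar\lambda}|$ is bounded away from zero; cf.\ \cite{DS2,lucio,FMS}) gives $v\equiv 0$ in a neighborhood of $(0,y_0)$; in particular $(0,y_0)\notin Z_{u_\infty}$, and the equality then propagates along the whole connected component $\mathcal{U}\subset\Sigma_{\bar\lambda}\setminus Z_{u_\infty}$ of $(0,y_0)$. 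This makes $\mathcal{U}$ a local symmetry region for $u_\infty$, contradicting Theorem~\ref{trittofritto} (whose hypotheses are all satisfied by $u_\infty$) and completing the proof. The main obstacle is the ``reflected gradient'' subcase, in which only $|\nabla u_{\bar\lambda+\varepsilon_n}(x_n)|\geq\rho$ so that a priori $(0,y_0)$ may lie in $Z_{u_\infty}$: one must first invoke the strong comparison principle in a one-sided neighborhood where the combined gradient is bounded below, in order to transfer the non-zero gradient information back to $u_\infty$ itself before one can legitimately speak of a local symmetry region in $\Sigma_{\bar\lambda}\setminus Z_{u_\infty}$ and apply Theorem~\ref{trittofritto}.
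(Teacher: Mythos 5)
Your overall architecture coincides with the paper's: contradiction, translated sequence, $C^{1,\alpha}$ compactness, a positive limiting profile $u_\infty$ touching its reflection at an interior point of $\Sigma_{\bar\lambda}$, propagation of the equality by the strong comparison principle on a component of $\Sigma_{\bar\lambda}\setminus Z_{u_\infty}$, and Theorem~\ref{trittofritto}. Two points, however, deserve attention. First, the ``reflected gradient'' obstacle you flag at the end has a much simpler resolution than the one you propose: since $w:=(u_\infty)_{\bar\lambda}-u_\infty\geq 0$ in $\Sigma_{\bar\lambda}$ and $w(0,y_0)=0$ with $(0,y_0)$ interior (because $\delta\leq y_0\leq\bar\lambda-\delta$), the point is an interior minimum of the $C^1$ function $w$, so $\nabla u_\infty(0,y_0)=\nabla(u_\infty)_{\bar\lambda}(0,y_0)$ and hence $|\nabla u_\infty(0,y_0)|\geq\rho$ in \emph{both} subcases, with no comparison principle needed at this stage. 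This is exactly what the paper does. Your alternative — invoking a strong comparison principle in a neighborhood where only the combined gradient $|\nabla u_\infty|+|\nabla(u_\infty)_{\bar\lambda}|$ is bounded below — is a heavier tool whose validity for $1<p<2$ with the first-order term must be justified with care; in particular citing \cite{DS2} is problematic, since the comparison principles there carry precisely the restriction $p>\frac{2N+2}{N+2}$ that this paper is designed to avoid. The paper only applies \cite[Theorem 2.5.2]{PSB} after it is known that $\nabla u_\infty(0,y_0)\neq 0$, i.e. on the component of $\Sigma_{\bar\lambda}\setminus Z_{u_\infty}$ containing the touching point.

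Second, and this is a genuine gap: your final sentence derives a contradiction ``with Theorem~\ref{trittofritto}'', but that theorem does not forbid local symmetry regions — it asserts that any such region $\mathcal{U}$ must satisfy $\partial\mathcal{U}\cap\{y=0\}\neq\emptyset$. As written, your argument therefore ends without a contradiction. The missing step is: from $\partial\mathcal{U}\cap\{y=0\}\neq\emptyset$, take $(z',0)\in\partial\mathcal{U}$; by continuity $u_\infty=(u_\infty)_{\bar\lambda}$ on $\overline{\mathcal{U}}$, so the Dirichlet condition gives $u_\infty(z',2\bar\lambda)=u_\infty(z',0)=0$ at an interior point of $\mathbb{R}^N_+$, contradicting the strict positivity $u_\infty>0$ obtained from the strong maximum principle. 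With this step added (and with the simpler gradient-transfer argument above), your proof matches the paper's.
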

\begin{proof}

Assume by contradiction that there exists $\delta >0$, with $0<\delta <\frac{\blambda}{2}$,
such that, given any $\vep_0>0$, we find $\vep\leqslant \vep_0$ and
$x_\vep=(x'_\vep,y_\vep)$ such that:
\begin{itemize}
\item [$(i)$]  $u(x'_\vep,y_\vep)\geqslant u_{\bar{\lambda}+\vep}(x'_\vep,y_\vep)$
\item  [$(ii)$] $x_\varepsilon$ belongs to the set
$$
\Big\{(x',y) \in\R^N : \delta\leqslant y_\vep\leqslant \bar{\lambda}-\delta\Big\} $$

and it holds the alternative: either $|\nabla u (x_\vep)|\geq\rho$ or $|\nabla u_{\bar \lambda +\varepsilon}(x_\varepsilon)|\geq\rho
$.
\end{itemize}

Take now $\vep_0=\frac{1}{n}$, then there exists $\vep_n \leqslant \frac{1}{n}$  and
a sequence $$
x_n=(x'_{n},y_{n})=(x'_{\vep_n},y_{\vep_n})
$$
such that
$$
u(x'_n,y_n)\geqslant u_{\bar{\lambda}+\vep_n}(x'_n,y_n)
$$
and satisfying conditions $(i)$ and $(ii)$ above. Up to subsequences we may assume that:
$$
y_n\rightarrow y_0 \quad \text{as} \quad n\rightarrow +\infty,\qquad
\text{ with } \quad\delta\leqslant y_0\leqslant \bar{\lambda}-\delta\,.
$$
Let us define
\begin{equation}\label{cazzo}
\tu_n(x',y)=u(x'+x'_n,y),
\end{equation}

Since both $u$ and $\nabla u$ are bounded on every strip $\Sigma_\eta$, $ \eta>0$, as before, by $C^{1,\alpha}$ estimates, Ascoli's Theorem and a standard diagonal process we get that :

\begin{equation}\label{roccaseccatrubis}
\tu_n\overset{C^{1,\alpha'}_{loc}({\overline {\mathbb{R}^N_+}})} {\longrightarrow}\tu
\end{equation}
(up to subsequences) for $\alpha '<\alpha$.
\

\noindent \emph{We claim that}
\begin{itemize}
\item[-] $\tu\geqslant 0 $ in $\mathbb{R}^N_+$, with $\tu(x,0)=0$ for every $x \in \mathbb{R}^{N-1}$;
\item [-] $\tu\leqslant \tu_{\blambda}$ in $\Sigma_{\blambda};$
\item[-] $\tu(0,y_0)= \tu_{\blambda}(0,y_0)$;
\item [-] $|\nabla \tu (0,y_0)|\geq \rho$.
\end{itemize}

\noindent To prove this note that, since each $\tu_n(x',y)$ is positive and satisfies the homogeneous Dirichlet boundary condition by construction, we have: $\tu\geqslant 0 $ in $\mathbb{R}^N_+$ and $\tu(x,0)=0$ for every $x \in \mathbb{R}^{N-1}$. It is also clear that $\tu\leqslant \tu_{\blambda}$ in $\Sigma_{\blambda}$ and $\tu(0,y_0)\geqslant \tu_{\blambda}(0,y_0)$.
Since (as shown above) $\tu\leqslant \tu_{\blambda}$,
actually there holds: $\tu(0,y_0)= \tu_{\blambda}(0,y_0)$.
Finally, at $x_0=(0,y_0)$
(where $\tu(0,y_0)= \tu_{\blambda}(0,y_0)$)
we have that $\nabla \tilde u(0,y_0)=\nabla\tilde u_{\bar \lambda}(0,y_0)$,
because $x_0$ is an interior minimum point for the function
$w(x):=\tilde u_{\blambda}(x)-\tilde u(x)\geq 0$.
For all $n$ we have
$|\nabla u (x_n)|\geq\rho$ or $|\nabla u_{\bar \lambda +\varepsilon_{n}}(x_n)|\geq\rho$,
and, using the uniform~$C^1$ convergence on compact set,  we get: $|\nabla \tu (0,y_0)|\geq \rho$.

\

Recalling that we assumed here $a(s)=1$, passing to the limit we obtain that $\tilde u$ satisfies
\begin{equation}\nonumber
\int_{\mathbb{R}^N_+}|\nabla \tilde{u}|^{p-2}(\nabla \tilde{u},\nabla \varphi)dx+\int_{\mathbb{R}^N_+}b(\tilde u)|\nabla \tilde u|^qdx=\int_{\mathbb{R}^N_+}f(\tilde{u})\varphi dx \quad \forall \varphi \in C^{\infty}_c(\mathbb{R}^N_+).
\end{equation}
\noindent Since  $\tu \geqslant 0$ in $\mathbb{R}^N_+$,
by the strong maximum principle \cite[Theorem 2.5.1]{PSB},
it follows that $\tu >0$ or $\tu=0$:
by the fact that $|\nabla \tu (0,y_0)|\geq \delta$, the case $\tu=0$ is not possible. Hence $\tu >0$ on $\mathbb{R}^N_+$.  Moreover we have  $\tu\leqslant \tu_{\blambda}$ in $\Sigma_{\blambda}$
and   $\tu(0,y_0)= \tu_{\blambda}(0,y_0)$. By the strong comparison principle \cite[Theorem 2.5.2]{PSB}, we have that
$\tu= \tu_{\blambda}$ in the connected component, say  $\mathcal{U}$, of $\Sigma_{\bar\lambda}\setminus Z_u$ containing the point $(0,y_0)$. By Theorem \ref{trittofritto} we have that $\partial \mathcal{U}\cap\partial \mathbb R^N_+ \neq \emptyset$.  The latter yields the existence of a point $ z=(z', 2 {\blambda} ) $ such that $ \tu(z) = 0$, which  contradicts $\tu >0$ in $\mathbb R^N_+$.
\end{proof}

\section{Proof of Theorem \ref {mainthm}}\label{sect.cort}

Let us start recalling that, in view of the changing of variable  in \eqref{kghksdhkhkghksresrrsr}, which preserves the monotonicity property, it is not restrictive to prove Theorem \ref {mainthm} in the case $a(\cdot)=1$. As already remarked, the assumption $\n u \in L^{\infty}(\mathbb{R}^N_+)$ implies that that $w$ and $\nabla w$ (see \eqref{kghksdhkhkghksresrrsr}) are bounded on every set of the form  $ \{0 \le y \le \lambda\}$, for any $ \lambda >0$. Thus, we can use the results demonstrated in Sections 3 and Sections 4.

The proof is based on the moving planes procedure. By Theorem \ref{th:COROwcpstrip} the set $\Lambda$ defined in \eqref{MP} is not empty and $ \bar\lambda \in (0, +\infty]$. To conclude the proof we need to show that $\bar\lambda=\infty$.\\

Assume that $ \bar\lambda$ is finite, set $ \lambda_0= {\bar\lambda}+2 $,
\[
M_0\,:=  \Vert u\Vert_{L^\infty(\{ 0 \le y \le {2\bar\lambda}+10 \})} + \Vert \nabla u \Vert_{L^\infty(\{ 0 \le y \le {2\bar\lambda}+10 \})} +1  >0
\]

\noindent and take $\tau_0=\tau_0(N,p,q,\lambda_0, M_0, \gamma)>0$
and $\eps_0=\eps_0(N,p,q,\lambda_0, M_0, \gamma)>0$ as in Theorem \ref{th:wcpstrip}.

By Proposition  \ref{lellalemma} we have that, given $0<\delta < \min \{\frac{\blambda}{2}, \frac{\tau_0}{4}\}$ and
$0< \rho < \eps_0$,  we find $\bar \e>0$ such that, for any
$0< \vep\leqslant \min \{\bar\e, \frac{\tau_0}{4},1\}$, it follows
\[
\text{Supp} \,W_\vep^+\subset \{0\leqslant y\leqslant \delta\}
\cup\{\bar{\lambda}-\delta\leqslant y\leqslant\bar{\lambda}+\vep\}\cup \left(\bigcup_{x'\in\mathbb R^{N-1}} B_{x'}^\rho\right),
\]
where $W_\vep^+ =(u-u_{\bar{\lambda}+\vep})^+\cdot\chi_{\{y\leqslant
\bar{\lambda}+\vep\}}$ and $B_{x'}^\rho$ is defined in \eqref{njgkdngkfkkjc}.

{\em We claim that $ u \le u_{\bar{\lambda}+\vep} $ in $\Sigma_{\bar\lambda+ \vep}$}, which contradicts the definition of $\blambda$ and yields that $\blambda=\infty$. This, in turn,  implies the desired monotonicity of $u$, that is $ \frac{\partial u}{\partial y}(x',y)\geqslant 0$ in  $\mathbb{R}^N_+ $.

To this end, we proceed by contradiction. Suppose that the open set

\[
\mathcal{S}_{(2\delta + \vep,\rho)} := \{ x \in \Sigma_{\bar\lambda+ \vep} \, : \, u(x) - u_{\bar{\lambda}+\vep}(x) >0\}
\]
is not empty, then $u$ and $ v =  u_{\bar{\lambda}+\vep} $ satisfy \eqref{Eq:WCP} with
$\lambda = y_0 = {\bar{\lambda}} +\vep \, ( < \lambda_0)$, as well as: $ \Vert u \Vert_{\infty} + \Vert \nabla u \Vert_{\infty} \le M_0$, $ \Vert v \Vert_{\infty} + \Vert \nabla v \Vert_{\infty} \le M_0$. Since by construction $ 2 \delta +\vep < \tau_0 $ and $ \rho < \eps_0$, we can apply Theorem \ref{th:wcpstrip} to conclude that  $ u \le u_{\bar{\lambda}+\vep} $ on $\mathcal{S}_{(2\delta + \vep,\rho)}. $ This clearly contradicts the definition of $\mathcal{S}_{(2\delta + \vep,\rho)}$. Hence $\mathcal{S}_{(2\delta + \vep,\rho)} = \emptyset$, which concludes the proof.
\begin{flushright}
$\square$
\end{flushright}

\section{Proof of Theorem \ref {mainthmnonpositive} and Theorem \ref {liouvillenextgenerationtris}
}\label{sect.cortjlhjdligld}

\noindent \textbf{Proof of Theorem \ref {mainthmnonpositive} .}\\
 \noindent  Since we assumed that $b(u)\geq 0$ then  $-\Delta_p u\leq f(u)$ so that
 Theorem 1.7 in \cite{FMS} applies and gives that
 actually $$0<u\leq z$$
 in $\mathbb{R}^N_+$. Note that, once it is proved that $0<u\leq z$, then the strong maximum principle (see \cite{PSB}) applies
 and gives that actually $0<u<z$. It follows furthermore that $u$ is strictly bounded away from $z$ in $\Sigma_\lambda$ for any $\lambda>0$.  In fact, if this is not the case, arguing as in the proof of Theorem \ref{trittofritto} (see case a)) we could easily construct a limiting profile $u_\infty$ with $0<u_\infty\leq z$, touching $z$ at some point. This is not possible again by the strong maximum principle \cite{PSB}.
 This is enough to repeat the proof  of Theorem \ref{mainthm} and get the thesis.\\

\noindent \textbf{Proof of  Theorem \ref {liouvillenextgenerationtris}.}\\
\noindent If $u$ is not identically zero, then it is strictly positive by the strong maximum principle (see \cite{PSB,V}). Therefore
 $u$ is monotone increasing w.r.t. the $y$-direction by Theorem \ref{mainthm} and
the proof of $b)$ and of $c)$ follows by \cite{MP,SZ} exactly in the same way as $b)$ and $c)$ in Theorem 1.6 of \cite{FMS}.\\

\noindent To prove $a)$ let $N=2$ and denote by $(x,y)$ a point in the plane. Define
\begin{equation}\begin{split}
& w(x)
:=\lim_{y\rightarrow \infty} u(x,y)\\
\end{split}\end{equation}
We see that (see e.g. the proof of Theorem 8.3 in \cite{FMS} for details) $w:\mathbb{R}\rightarrow \mathbb{R}$ is  non-negative and bounded with
\[
-(|w'|^{p-2}w')'=f(w)\geq 0\,.
\]
A simple O.D.E analysis shows that $w$ is constant and, by the assumptions on $f$, it follows that necessarily $w=0$ that also implies $u=0$ and the thesis. \\

To prove the non-existence result when $ f(s) >0$ for $ s \ge0$, we first consider the case $ N=2$. By the above argument (which uses only the assumption $ f(s) >0$ for $ s > 0$) we infer that $u=0$ which contradicts $ f(0) >0$. Thus, there are no non-negative solutions. The same argument can be employed to treat the case $ N \ge 3$. Indeed, in this case the assumption $ f(0) >0$ implies that $f(s) \geq \lambda s^\frac{(N-1)(p-1)}{N-1-p}$ in $[0,\delta]$, for some $\lambda,\delta>0$, yielding $ u=0$. Again contradicting $ f(0) >0$.
\begin{flushright}
$\square$
\end{flushright}


\bigskip

\end{document}